\theoremstyle{plain}
\newtheorem{theorem}{Theorem}[section]
\newtheorem{lemma}[theorem]{Lemma}
\newcommand{\Pf}{\mathrm{Pf}}
\newcommand{\s}{\mathrm{\protect\overrightarrow{\mathrm{S}}}}
\newcommand{\xar}[1]{\ensuremath{\xrightarrow{#1}}}
\newcommand{\mf}[1]{\mathfrak{#1}}
\newcommand{\mc}[1]{\mathcal{#1}}
\newcommand{\mr}[1]{\mathrm{#1}}
\newcommand{\mb}[1]{\mathbf{#1}}
\newcommand{\R}{\mathbb{R}}
\newcommand{\Q}{\mathbb{Q}}
\newcommand{\Z}{\mathbb{Z}}
\newcommand{\C}{\mathbb{C}}
\newcommand{\T}{\mathbb{T}}
\newcommand{\U}{\mathrm{U(1)}}
\newcommand{\bs}[1]{\mathbf{#1}}
\newcommand{\on}[1]{\operatorname{#1}}
\newcommand{\la}{\langle}
\newcommand{\ra}{\rangle}	
\newcommand{\ol}[1]{\overline{#1}}
\newcommand{\ul}[1]{\underline{#1}}
\newcommand{\op}{\mathrm{op}}
\newcommand{\N}{{\pmb  \Delta}} 
\newcommand{\NN}{{\underline{\pmb \Delta}}} 
\titleformat{\subparagraph}[runin]{\normalsize\bfseries}{\textbf{\arabic{subparagraph}.}}{1em}{}[]
\title{On local combinatorial formulas for Chern classes of triangulated circle bundle}
\author{ Nikolai Mnev\thanks{PDMI RAS;  Chebyshev Laboratory, SPbU. Supported by the Russian Science Foundation grant №14-21-00035 } \\ mnev@pdmi.ras.ru \\
	Georgy Sharygin\thanks{ITEP; MSU.
	Supported by the Russian Science Foundation grant
	№16-11-10069 and RFBR grant №14-01-00329} \\ sharygin@itep.ru}
\begin{document}
\maketitle
\begin{abstract}
	Principal circle bundle over a PL polyhedron can be triangulated and thus obtains combinatorics. The triangulation is assembled from triangulated circle bundles over simplices. To every triangulated circle bundle over a simplex we associate a necklace (in combinatorial sense). We express rational local formulas for all powers of first Chern class in the terms of mathematical expectations of parities of the associated necklaces. This rational parity is a combinatorial isomorphism invariant of triangulated circle bundle over simplex, measuring mixing  by triangulation of the circular graphs over vertices of the  simplex. The goal of this note is to sketch the logic of deduction these formulas from Kontsevitch's cyclic invariant connection form on metric polygons.
	
\end{abstract}
\tableofcontents
\section{Introduction}
\setcounter{subparagraph}{-1}
\subparagraph{}
After submitting this note we discovered that the main computation in Sections \ref{geometry},\ref{algebra} is equivalent to computation in \cite[Sections 1,2]{Igusa2004} of universal combinatorial cochains on cyclic category out of universal cyclic connection form. Still we have an accent on geometry and combinatorics of triangulations.
\subparagraph{}
Circle bundle $\T \xar{} E \xar{p} B$ (\cite{Chern1977}) is a principal fiber bundle with a commutative Lie structure group $\T = \R/\Z \approx \U$. There is a classic chain of  homotopy equivalences
\begin{equation}\label{equiv}
B\T \approx B\U \approx B\mr O(2) \approx \C P^\infty \approx K(\Z,2).
\end{equation}
Thus isomorphism classes of circle bundles over $B$ are in one-to one correspondence with classes of complex line bundles, classes of  orientend 2-dimensional real linear vector bundles and elements of 2-dimensional integer cohomology group $H^2(B;\Z)$. The class of bundle $c_1(p) \in H^2(B;\Z)$ is called its first Chern or Chern-Euler class. All characteristic classes of $p$ are powers $c^h_1(p) \in H^{2h}(B;\Z)$.

If base $B$ can be triangulated then $p$ can be triangulated over a subdivision of given triangulation. It is not true that the base $B$ can always be triangulated (see \cite{Manolescu2014}) but it happens in most interesting cases. See \cite{Goresky1978,Verona1980} for general triangulation and bundle triangulation theory. We are interested in combinatorics of triangulations in connection with integer and rational local combinatorial formulas for Chern class and its powers. Triangulated bundle obtains structure of oriented PL bundle with fiber $S^1$. Old folklore
\begin{equation}\label{pl/u}
\on{PL}(\s)/\U \approx *
\end{equation} theorem completes sequence (\ref{equiv}) by ``$B\on{PL}(\s)\approx ... $''. Therefore combinatorics of triangulation contains full knowledge about characteristic classes.
\subparagraph{}
Our initial intention is to find some more hints for correct combinatorics in the classical problem of detecting  local combinatorial formulas for characteristic classes (\cite{GGL},\cite{GM1992},\cite{Gaifullin:2005}). It is well known that the problem faces many troubles in a  fruitful way. Particularly, Chern-Simons theory was a by-product as it is mentioned in the first lines of \cite{CS74}. Configuration spaces, matroidal point of view on the problem (\cite{GGL},\cite{GM1992})  faces algebro-geometric universality of moduli spaces of configurations in a way especially interesting for the first author. Rational Euler \cite{Rocek1991} and Stiefel-Whitney \cite{Halperin1972} classes of tangent bundle have a clear local combinatoric nature (but in the last case up to now there is only a not very enlightening proof). From the combinatorial point of view local formulas are very interesting combinatorial functions, universal cocycles, associated to elementary families of cell complex reconstructions. Such as chains of abstract subdivisions or chains of simple maps, abstract mixed subdivisions (multi-simplicial complexes) chains in MacPhersonian etc.

In the line of simple examples obviously there should be Chern classes of triangulated (or in some other way locally combinatorially encoded) circle bundles. The setup for combinatorics of circle bundle and an outline  of construction for the rational simplicial local formulas was presented by I. Gelfand and R. MacPherson in 14 lines around Proposition 2 \cite[p.306]{GM1992}. There are deep formulas in differential situation  when the bundle is encoded by pattern of fiberwise  singularities of Morse function on total space \cite{IgusaKlein1993,Kazarian:1998,Kazarian97}. These last formulas connect naturally the problem to higher Franz-Reidemeister torsion \cite{Igusa2002} and cyclic homology.
The role of line bundles in geometry of characteristic classes and related mathematical physics  is special (\cite{Brylinski08}), so perhaps a reasonable idea is to understand the case of circle bundles slightly better. One day to our surprise we discovered   that canonically looking local formulas for all characteristic cocycles of triangulated circle bundle trivially pop out from associating to triangulated circle bundle Kontsevich's connection \cite[p.8]{Kontsevich92} on metric polygons. The answer is expressed through mathematical expectations of parities of necklaces associated with triangulation. Below we will present this  ``parity local formulas" and a sketch of construction.

\subparagraph{Plan.}
In Section \ref{loc_form} we introduce abstract simplicial circle bundle (s.c. bundle) and describe Gelfand-MacPherson's  setup for simplicial local formulas in the case of circle bundles. In Section \ref{scbcw} we associate to a s.c. bundle cyclic diagram of words on the base. In Section \ref{rplf} we present rational parity formula for all powers of first Chern class and formulate main Theorem \ref{thm}. In Section \ref{geometry} we introduce on geometric realization of s.c. bundle canonical metric of ``geometric proportions" $gp$. With this special metric $gp$ the bundle becomes piecewise-differential principal circle bundle. It has gauge transition transformations described by functions encoded by matrices of words as linear operators (analogs of permutation matrix). The circle bundle canonically appears as a pullback of formal universal circle bundle over Connes' cyclic simplex by very special classifying map.  The classifying map is described by matrices of words as linear operators. In Section \ref{algebra} we describe Kontsevich's connection form on mteric polygons as universal cyclic invariant connection  on universal circle bundle over Connes' cyclic cosimplex. The pullback of the connection is PD connection on geometric realisation of s.c. bundle with metric $gp$. We compute pullbacks of universal cyclic characteristic forms using matrix maps and, using Okuda's sum of minors - Pfaffian identity, obtain rational parity coefficients.
In Section \ref{proof} we assemble the proof of Theorem \ref{thm}.
In Section \ref{notes} we comment omissions and possible outputs.

\subparagraph{}
The first author is deeply grateful to Peter Zograf for pointing on Kontsevich's connection form.  Authors thanks for hospitality
Oberwolfach Mathematical Institute and IHES where they had an opportunity to work together.

\section{Local simplicial formulas for circle bundles} \label{loc_form}

\subparagraph{Triangulation of circle bundle.}
If a map of finite abstract simplicial complexes $\mf E \xar{\mf p} \mf B$ triangulates circle bundle $p$ then we can suppose that geometric realization  $|\mf B| = B$ and the triangulation is a set of data $(\mf p, p,h)$:
\begin{equation}\label{triang}
\xymatrix{
{\mf E} \ar[d]^{\mf p} & E \ar[d]^p & {|\mf E|}\ar[rr]^h \ar[dr]_{|\mf p|} & & E \ar[dl]^p \\
\mf B                   & {|\mf B|}       &  & {|\mf B|} &
}
\end{equation}
where $|\mf E|\xar{h} E$ is a fiberwise homeomorphism commuting with $|\mf p|$ and $p$. The map $|\mf p|$ is a PL fiber bundle. On the other hand, from the point of view of $\mf p$ triangulation homeomorphism $h$ is equivalent to introducing on PL bundle $|\mf p|$ with fiber $S^1$  orientation and  continuous metric such that for any $x\in |\mf B|$ the fiber over $x$, oriented PL circle $|\mf p|^{-1}(x)$, has perimeter equal to one. From this point of view, triangulation homeomorphism $h$ provides $p$ by PL structure related to very special system of local sections. We suppose that simplicial complex $\mf B$ is \textit{locally ordered}, i.e. its simplices have total orders on vertices and face maps are monotone injections. That is to say $\mf B$ is a finite semi-simplcial set with an extra property that each simplex is determined by its vertices. Local order makes simplicial chain and cochain complexes available.

Simplicial bundle $\mf p$ in (\ref{triang}) can be assembled as a colimit from subbundles  over base ordered simplices using simplicial face transition maps. Disassembly on bundles over simplices commute with geometric realization. Therefore subbundle  of $\mf p$ over simplex  triangulates circle bundle over geometric simplex, which is trivial bundle. Orientation of trivial circle bundle selects preferred generator in 1-homology of total space. Hence subbundles of $\mf p$ over simplices are equipped with orientation class in simplicial homology in such a way that boundary transition maps of subbundles sends generator to generator. This assembles to constant fiber orientation local system on $\mf B$.
\subparagraph{Basic simplicial notations.} \label{simp}
We denote by $\N$ category of finite ordinals $[0],[1]...$
Non-strictly  monotone maps between them are called operators; injections are called face operators surjections are called degeneracy operators. We denote by $\NN$ subcategory of $\N$  having face operators only. Category of finite ordinals
can be presented by generators and relations. The standard generators of $\N$
are elementary boundary operators $[k-1]\xar{\delta_j} [k], j=0,...,k$, the monotone injections missing element $j$ in the image and $[k+1]\xar{\sigma_j}[k],j=0,...,k$, the monotone surjections sending elements $j,j+1$ in domain to the element $j$ in codomain. Category $\NN$ is generated only by boundaries.  We denote by $\la k \ra$ ordered combinatorial simplex: simplicial complex formed by all subsets of $[k]$. For an operator $[m] \xar{\mu} [k]$ We denote by $\la m \ra \xar{\la \mu \ra} \la k \ra$ the induced simplicial map of combinatorial simplices. Standard geometric $k$-simplex with ordered vertices is denoted  by $\Delta^k$. Geometric realisation of $\la \mu \ra$ is
the map $\Delta^m \xar{|\mu|} \Delta^k$ written in barycentric coordinates
$$\text{for $i=0,...,k$ }|\mu_i| (t_0,....t_m) =
\begin{cases}
0 & \text{if $i \not\in \on{im} \mu$}\\
\sum_{j\in \mu^{-1}(i)} t_j &  \text{if $i \in \on{im} \mu$}
\end{cases}  $$
\subparagraph{Simplicial circle bundles.} \label{ccb}
We shall call \textit{elementary simplicial circle bundle} (elementary s.c. bundle) a map $\mf R \xar{e} \la k\ra $ of simplicial complex $\mf R$ onto ordered simplex $\la k\ra$, whose geometric realization $|\mf e|$ is a trivial $PL$ fiber bundle over geometric simplex with fiber $S^1$, equipped with fixed orientation 1-dimensional homology class of $\mf R$. Here is a picture of elementary s.c. bundle (Fig. \ref{eb}).
\begin{figure}[h!]
	\begin{center}
		\includegraphics[width=3.0in]{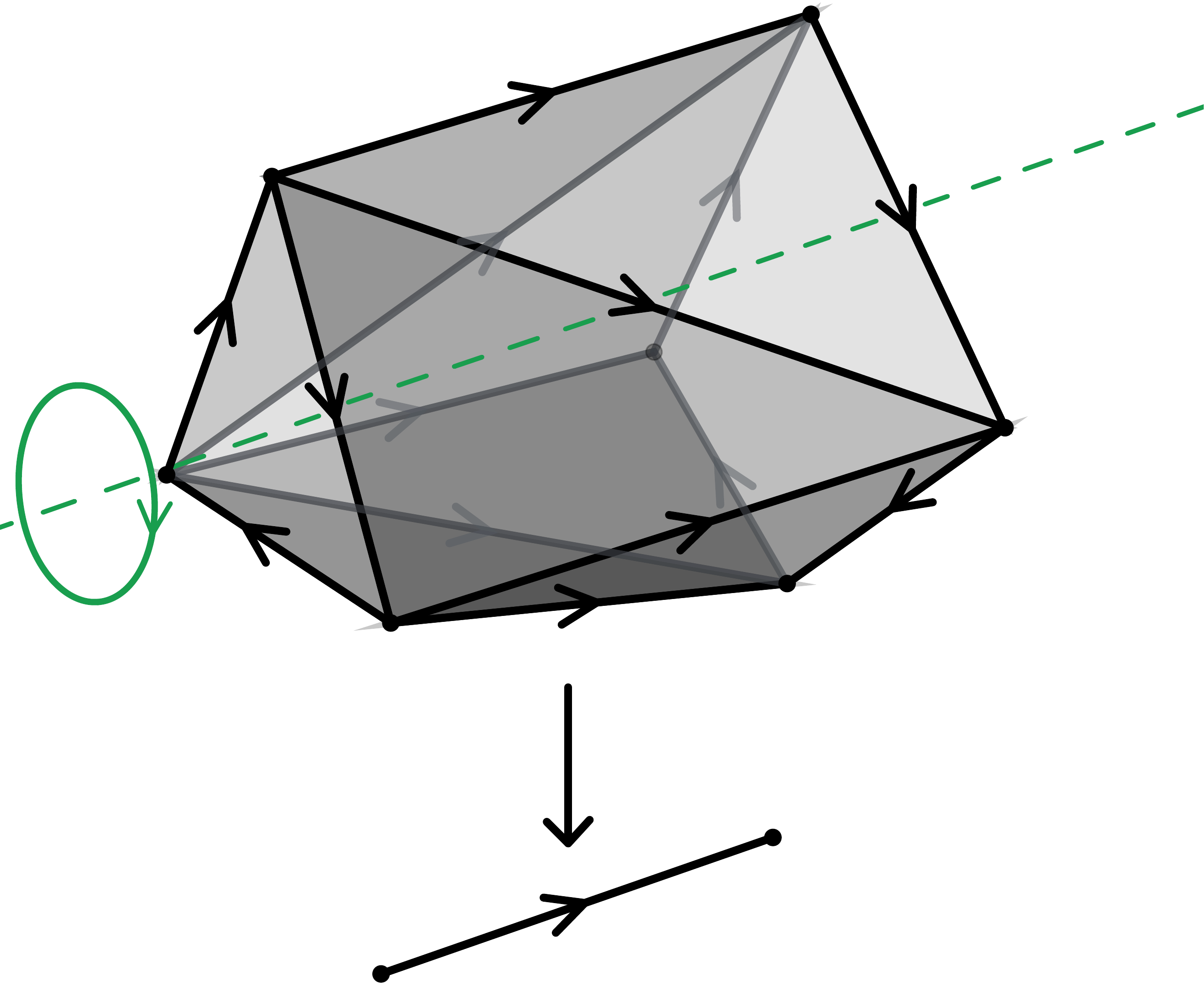} 
		\caption{ Elementary simplicial circle bundle \label{eb}}
	\end{center}
\end{figure}
We define \textit{boundary map} $\delta^* \mf e \xar{\delta_* e}\mf e$ of elementary s.c. bundles over simplicial boundary  $[m]\xar{\delta}[k]$ to be a pullback diagram
$$\xymatrix{
\delta^*\mf R \ar[r]_{\delta_* \mf e}  \ar[d]^{\delta^* \mf e}& \mf R \ar[d]^{\mf e} \\
\la m \ra \ar[r]^{\la \delta \ra} & \la k \ra
}$$
We suppose that $\delta_* \mf e$ sends orientation class of $\delta^*\mf R$ to orientation class of $\mf R$.
Let $\mf B$ be a locally ordered finite simplicial complex. We call \textit{simplicial circle bundle (s.c. bundle)} $\mf E \xar{\mf p}\mf B$ a map of finite simplicial complexes $p$ that has a geometric PL fiber bundle with fiber $S^1$ as its geometric realization and equipped with a fixed local system that gives fiber orientation on elementary subbundles.

Simplicial circle bundles always triangulate some principal circle bundles. To build a circle bundle, triangulated by $\mf p$, it is sufficient to choose a continuous metric on geometric realisation $|\mf E|$ such that all fibers of $|\mf p|$ were circles with unit perimeter. Transition maps become orientation preserving isometries and hence they will be $\T$ - transition maps.  One possible choice of such metric is to normalize fiberwise standard flat metric on simplexes in $|\mf E|$. Taking in consideration the simple fact (\ref{pl/u}), we see that all circle bundles triangulated by $\mf p$ are isomorphic.

\subparagraph{Gelfand-MacPherson's rational simplicial local formulas} \label{GM}
Denote by ${\mf R}^c(\s)$ the semi-simplicial set of ismorphism classes of all elementary simplicial circle bundles. Elements of ${\mf R}_k^c(\s) $ are combinatorial isomorphism classes of elementary s.c. bundles over $k$-dimensional oriented combinatorial simplex. Boundary map is generated by elementary s.c. bundles boundary  over base simplex  boundary. Boundaries are well-defined, sending isomorphism classes to isomorphism classes.  Superscript ``$c$" stays for the fact that we are considering triangulations by classical simplicial complexes.  With the bundle $\mf p$ a map of semi-simplicial sets $\mf B \xar{\mf G_{\mf p}} \mf{ R}^c(\s)$ is associated, sending base simplex $U \in \mf B$ to the combinatorial isomorphism class of elementary subbundle $\mf p_U$ over that simplex. The map $\mf G_{\mf p}$ forgets a part of information about the bundle. Since elementary s.c. bundle can have nontrivial automorphisms one can not recover all boundary transition functions from $\mf G_{\mf p}$ and hence generally one can not recover from $\mf G_{\mf p}$ the entire $\mf p$ up to isomorphism.

 \textit{Rational simplicial local formula for Chern class}
$C_1^h$ is a $2h$-cocycle on $\mf{ R}^c(\s)$ represented as a rational combinatorial function of elementary bundles over $2h$ simplex, such that pullback of this cocycle under the map
$\mf B \xar{\mf G_{\mf p}} \mf R^c(\s)$ is a rational simplicial $2h$-Chern cocycle of $\mf p$. To put it simple, the value of cocycle on the ordered simplex $U^{2h}$ of $\mf B$ should be an automorphism invariant $C^{2h}_1(\mf p_u)\in \Q$ of  subbundle $\mf p_U$ over that simplex.

Due to the mentioned forgetful nature of $\mf G_{\mf p}$ it is not obvious that such universal cocycles exist, however the rational universal cocycles indeed exist  by Proposition 2 of \cite{GM1992}: it is speculated there that the transgression of rational fiber coorintation class in Serre spectral sequence of s.c. bundle can be expressed as a local rational formula involving combinatorial Laplacian. The result was never calculated, but clearly it ends up in a slightly more complicate automorphism invariant function of associated 3-necklace for 1-st Chern class than our rational parity. The local formulas for powers one can always express automatically by using the \v Cech-Whitney formula for cup product on simplicial cochains. This will result in certain not very transparent, but purely combinatorial concrete rational functions $\mbox{}^{GM}\!C_1^{2h}(\mf e)$ of elementary s.c. bundles.

\section{Simplicial circle bundles and cyclic words.} \label{scbcw}

\subsection{Half of Connes' cyclic category}
Connes' cyclic category  $\N C$ (see \cite[Ch 6.1]{Lo}) has finite ordinals as objects and morphisms are generated by all operators from $\N$ and all cyclic permutations of finite ordinals. By this definition $\N C$ contains $\N$ as a subcategory.
For cyclic category we use new  notations for standard simplicial generators since  we will need a two-parametric cyclic-simplicial structures on words.  The simplicial generators are
\begin{equation}\label{}
\xymatrixcolsep{3pc}\xymatrix{
{[n-1]}\ar@/^/[r]^-{\partial_j^{n-1}} & \ar@/^/[l]^-{s_i^n} [n]
}, j=0,...,n; i=0,...,n-1
\end{equation}
Apart of simplicial generators,
$\N C$  has generator $[n]\xar{\tau_n}[n]$ acting on $[n]$ by rule  $ \tau_n(i) = (i-1) \mod (n+1)$, i.e. as the permutation which is  left cyclic shift by one; $\tau_n^j(i) = (i-j) \mod (n+1)$. In $\N C$ there is an ``extra degeneracy", the surjection $[n] \xar{s^n_n}  [n-1]$
which does not exist in $\N$ but exist in $\N C$. It is defined as $s^n_n = s^n_0\tau^{-1}_n$.

\subparagraph{Duality.} \ There are two subcategories in $\N C$. One is $\NN C$, the subcategory
generated by all boundaries $\partial_i^n$  and cyclic permutations $\tau_n$. Another category $\overline{\pmb \Delta}C$ is generated by all standard monotone degeneracies $s^n_i$, \emph{extra degeneracy} $s_n^n$ and  cyclic shifts $\tau_n$.

The category $\N C$ is self-dual, i.e there is a categorial isomorphism
$\N C \xar{\bullet^\op}\N C^\op $. Since $\N C$ has automorphisms the duality involution is not unique.
With the extra degeneracy  a duality can be presented on generators as follows:
for $i \in [k]$ and $[k-1] \xar{\partial^{k-1}_i} [k]$;  the dual map is   $$\partial_i^\op = ([k] \xar{s^k_i} [k-1]);
\tau_k^\op = \tau_k^{-1}.$$ The duality interchanges $\NN C$ and $\overline \N C$.
\subparagraph{} \label{cyl} The duality has remarkable graphical interpretation in the terms of cylinder of ``simple map" of oriented graphs, oriented cycles (it can be  a loop) having a vertex fixed.

The map of oriented graphs is a map sending vertices to vertices, arcs to vertices or arcs in a way that incident vertex and arc go either to the same vertex or to incident vertex and arc. Oriented graphs can be identified with 1-dimensional semi-simplicial sets, maps of graphs with singular maps of semi-simplicial sets. Graph $g$ having geometric realization $|g|$, map of graphs $g_0 \xar{f} g$ also obtains a geometric realization $|g_0|\xar{|f|}|g|$ and we can consider cylinder $Cyl(|f|) \xar{}{[0,1]}$ of this map. The cylinder has natural structure of 2-dimensional cell complex composed from solid triangles and quadrangles and having cellular projection on interval $[0,1]$. The map $f$ is called \textit{simple }\cite{waldhausen2013} if for any point $u \in |g|$ the preimage $|f|^{-1} u \subseteq |g_0|$ is contractible.
Let both graphs $g_0,g_1$ be  oriented cycles. In this case the map is simple if preimage of every vertex is an embedded interval or, dually, any arc has a single preimage. The fundamental observation is that the map is simple iff projection $Cyl (f) \xar{} [0,1]$ is a trivial PL $\s$-fiber bundle. In big generality this is Cohen's theory of cylinder of PL map \cite{Cohen:1967}.
\begin{figure}[!h]
	\begin{center}
		\includegraphics[width=3.5in]{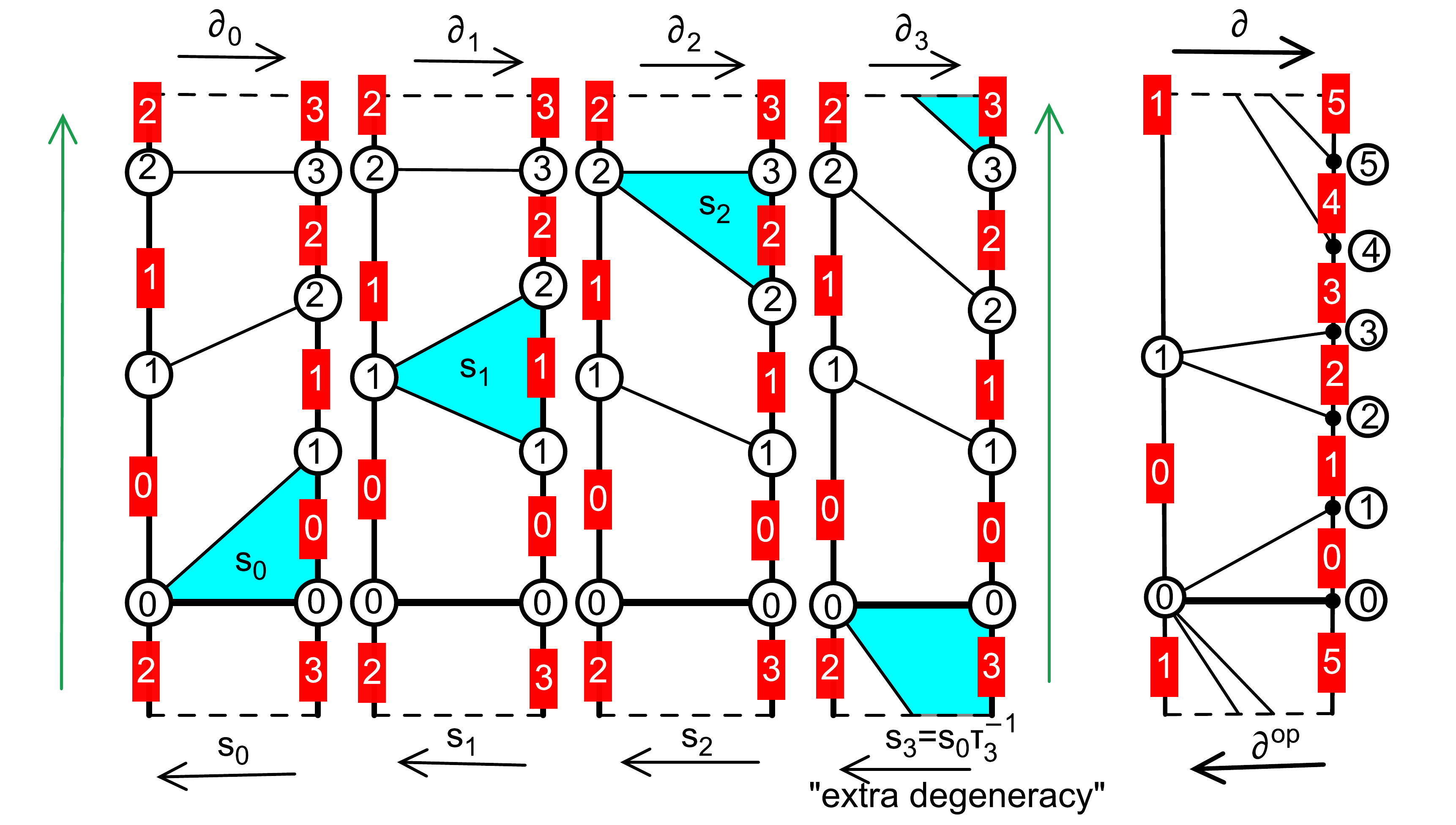}
		\caption{ \label{cycdual}}
	\end{center}
\end{figure}
 Let us have a fixed vertex $s_0$ on oriented cycle $g_0$. Then orientation creates a linear order on vertices and arcs of $g_0, g$. We suppose that in this order $s_0, f(s_0)$ are minimal. Also we suppose that the position of an arc in this ordering is equal to the position of its tail. In the terms of the orders, a map $f$ is simple iff the map $f$ is a $\ol \N C$-morphism $\ol f$ on ordered  vertices. If it the case  than dual of $\ul \N C$ morphism $\ol {f}^\op$ is a monotone injection on ordered arcs sending an arc to its unique preimage. On Fig \ref{cycdual} we depicted this duality on cone for generators and for general case.

For a  boundary
$([k]\xar{\partial}[m] )\in \mr{Mor}\NN C$ the general formula for dual cyclic degeneracy $([m]\xar{\partial^\op}[k]) \in Mor\overline\N C$ is as follows:
\begin{equation}\partial^\op(i)=\begin{cases}0 &\text{ if } \forall j: \partial (j) < i\\
\min_{\partial (j)\geq i } j & \text{ if } \exists j \text{ such that } \partial(j) \geq i \end{cases}\label{dual}\end{equation}
It can be computed graphically using the cylinder of simple map or inductively on generators.
\subparagraph{Cyclic-mono factorisation.}
Any morphism $[k] \xar{\varrho} [m]$ in $\N C$ has a unique factorization into cyclic permutation followed by monotone $\N$-operator.
The same is true for $\NN C$ and $ \overline{\N}C$.
This is Connes' theorem (\cite[Theorem 6.1.3]{Lo}).   We will fix a  simplified  formula for this decomposition for $\NN C$ case.
\begin{lemma}
	For any boundary map $[k] \xar{\partial} [m]$ and  cyclic permutation $[m]\xar{\tau^i_m} [m]$ there are unique boundary $[k] \xar{(\tau^i_m)^* \partial} [m] $ and cyclic permutation $[k]\xar{(\partial^* \tau_m^i)_k} [k]$ such that the diagram is commutative
	\begin{equation*}
	\xymatrixcolsep{4pc}\xymatrix{
	{[k]} \ar[d]_{\partial^*\tau^i_m} \ar[r]^{\partial} & {[m]} \ar[d]^{\tau_m^i} \\
	{[k]} \ar[r]_{(\tau_m^i)^* \partial} & {[m]}
	}
	\end{equation*}
	
	The following formula holds:
	\begin{equation}\partial^* \tau_m^i = \tau^{\partial^\op(i)}_k,\label{dualdec}\end{equation} where $\partial^\op$ defined in (\ref{dual}).
\end{lemma}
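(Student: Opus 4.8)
The factorisation asserted to exist is exactly the one supplied by Connes' cyclic--monotone decomposition theorem (\cite[Theorem 6.1.3]{Lo}) applied in $\NN C$: it already guarantees both existence and uniqueness of a boundary $(\tau_m^i)^*\partial\colon[k]\to[m]$ and a cyclic permutation $\partial^*\tau_m^i\colon[k]\to[k]$ with $\tau_m^i\circ\partial=(\tau_m^i)^*\partial\circ(\partial^*\tau_m^i)$. So the plan is not to reprove the decomposition but only to \emph{identify} its cyclic factor. By uniqueness this reduces to a single verification: if we set $a:=\partial^\op(i)$ and posit that the cyclic factor is $\tau_k^a$, then it suffices to check that the remaining map $\tau_m^i\circ\partial\circ\tau_k^{-a}\colon[k]\to[m]$ is a monotone injection, i.e.\ a boundary; it will then automatically be $(\tau_m^i)^*\partial$.

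To carry this out I would first unwind the three maps. Put $S=\partial([k])=\{\partial(0)<\partial(1)<\dots<\partial(k)\}\subseteq[m]$, so that $\partial$ is the unique order isomorphism $[k]\xrightarrow{\sim}S$; recall $\tau_m^i(j)=(j-i)\bmod(m+1)$ and $\tau_k^{-a}(l)=(l+a)\bmod(k+1)$. Hence the composite sends $l\in[k]$ to $\bigl(\partial\bigl((l+a)\bmod(k+1)\bigr)-i\bigr)\bmod(m+1)$. Everything now hinges on one elementary remark about the $k+1$ integers $\partial(0),\dots,\partial(k)$: the residue $(\partial(j)-i)\bmod(m+1)$ equals $\partial(j)-i$ when $\partial(j)\geq i$ and equals $\partial(j)-i+m+1$ when $\partial(j)<i$, so that running $j$ from $0$ to $k$ produces an increasing sequence suffering exactly one downward jump, precisely at the least index $j$ with $\partial(j)\geq i$ --- which is $a=\partial^\op(i)$ as given by formula (\ref{dual}) (and there is no jump, $a=0$, when no such $j$ exists). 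Precomposing with $\tau_k^{-a}$ re-lists the indices as $a,a+1,\dots,k,0,1,\dots,a-1$, i.e.\ it places the ``small'' block $\{\partial(j)-i:\partial(j)\geq i\}$ before the ``large'' block $\{\partial(j)-i+m+1:\partial(j)<i\}$; since $\partial(k)-i\leq m-i<m+1-i\leq\partial(0)-i+m+1$, the two blocks are disjoint and in the right order, so the resulting sequence
\[ \partial(a)-i<\cdots<\partial(k)-i<\partial(0)-i+m+1<\cdots<\partial(a-1)-i+m+1 \]
is strictly increasing. Thus $\tau_m^i\circ\partial\circ\tau_k^{-a}$ is a monotone injection, and by uniqueness in Connes' theorem we conclude $\partial^*\tau_m^i=\tau_k^{\partial^\op(i)}$, which is (\ref{dualdec}).

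An alternative, more formal route --- which I would relegate to a remark --- is to write $\partial=\partial_{j_1}\cdots\partial_{j_r}$ as a composite of elementary boundary operators and push $\tau_m^i$ leftward through them one factor at a time; each such step is the single--elementary--boundary case, where the dual $\partial_j^\op$ is the degeneracy $s_j$ of the duality paragraph and the computation is a direct finite check, and the contravariance of $\bullet^\op$ assembles the successive cyclic exponents into $\partial^\op(i)$. The only point requiring attention in either approach is the bookkeeping of the two wrap-around regimes $\partial(j)<i$ versus $\partial(j)\geq i$, together with the trivial but essential non-overlap inequality coming from $0\leq\partial(0)$ and $\partial(k)\leq m$; once Connes' theorem is granted for $\NN C$, I do not expect a genuine obstacle.
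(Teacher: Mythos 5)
Your argument is correct. Both you and the paper lean on Connes' unique cyclic--monotone factorisation in $\NN C$ for existence and uniqueness, so the only real content is identifying the cyclic exponent as $\partial^\op(i)$; where you differ is in how that identification is made. The paper's proof is essentially a pointer: it identifies the factorisation with ``a part of Connes' proof,'' namely the circular mapping cylinder of the simple map $\partial^\op$ with a marked section, and reads off the exponent from the observation that $\tau_m^i$ acts by moving the zero section of that cylinder. You instead verify directly, by the two-regime residue computation, that $\tau_m^i\circ\partial\circ\tau_k^{-a}$ with $a=\partial^\op(i)$ is monotone injective (the small block $\{\partial(j)-i:\partial(j)\geq i\}$ listed before the large block $\{\partial(j)-i+m+1:\partial(j)<i\}$, separated by the inequality $\partial(k)-i\leq m-i<m+1-i\leq\partial(0)-i+m+1$), and then invoke uniqueness. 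Your version is self-contained and checkable without unwinding the cylinder picture, including the degenerate case $a=0$ when all $\partial(j)<i$; what it gives up is the geometric explanation of \emph{why} the answer is $\partial^\op(i)$, i.e.\ the duality between deleting a vertex and selecting the unique preimage arc that the paper's Figure on the cylinder is meant to convey. Either argument suffices; yours could replace the paper's with no loss of rigour.
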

\begin{proof}
	This is a part of Connes' theorem, and it is proved by a part of Connes' proof
	which in our case is consideretion of the circular mapping cylinder of the map $\partial^\op$  with section fixed, which we discussed earlier. The map $\tau^i_m$ acts as  changing zero section of the cylinder.
\end{proof}

\subsection{Simplicial circle bundles and cyclic diagrams of  words on the base}
To a simplicial circle bundle defined in \S \ref{ccb} with fixed combinatorial sections $\bs S_0$ over simplices of base we associate a cyclic diagram of words on the base.

\subparagraph{Words and necklaces.} \label{cw}
\label{neck}
\textit{ Word of length $n+1$ in an ordered alphabet having $k+1$ elements}, $k \leq n$ is
any surjective map $[n]\xar{w} [k]$ (the map is not required to be monotone). Cyclic group $\Z/(n+1) \Z$ acts by cyclic permutations on $[n]$.  We can extend this cyclic action to words: just put $\tau_n w(i) = w(\tau_n(i)) $. Orbit of a word under cyclic permutations is called ``circular permutation" or \textit{``oriented necklace"} with $n+1$ beads colored by $[k]$.
We consider cyclic shift of word as a morphism between words
\begin{equation}\label{wrot}
\xymatrix{
{[n]} \ar[rr]^{\tau_n^i} \ar[dr]_{(\tau_n^i)^* w} & & {[n]} \ar[dl]^w \\
&{[k]} &}
\end{equation}
If we have a word $w$ and a boundary operator, i.e. a monotone injection $[k_0]\xar{\delta}[k]$, then a unique pullback is defined
\begin{equation}\label{wbound}
 \xymatrix{
{[n_0]} \ar[d]_{\delta^* w} \ar[r]^{\partial_{\delta}} &{[n]} \ar[d]^w  \\
 {[k_0]} \ar[r]^\delta &  {[k]}}
\end{equation}
where $\partial_{\delta}$ is the boundary operator on finite ordinals induced by boundary $\delta $ on codomain of words. To say it differently, the word $\delta^* w$ is a word obtained from $w$ by deleting all letters not in the image of $\delta$ and $\partial_{\delta}$ is the embedding of $\delta^* w$ into $w$ as a subword.

Define \textit{cyclic category of words} $C \mc W$ with words as objects, and morphisms \textit{cyclic morphisms of words}, i.e. $\NN C$-morphisms of words generated by cyclic shifts (\ref{wrot}) and boundaries (\ref{wbound}) (i.e. $w\mapsto \delta^*w$) over  boundaries on alphabet. Category $C\mc W$ has two projection functors
$$\NN C \xleftarrow{\on{Dom}} C\mc W \xar{\on{Codom}} \NN,$$
the ``domain" projection to $\NN C$ and ``codomain" projection to $\NN$. Codomain projection makes $C \mc W$ the category fibered in commutative groupoids over $\NN$. As in the category $\NN C$ any cyclic morphism of words has unique decomposition into cyclic shift followed by boundary.
\subparagraph{Words as matrices} \label{mat}
Words has associated matrices and we may consider these matrices as linear operators. This is our key trick.

Let $\mc L(w)$ be the $[n]\times [k]$ matrix
\begin{equation}\label{mw}
\mc L_i^j(w) =
\begin{cases}
1 & \text{if $w(i)=j$} \\
0 & \text{if $w(i)\neq j$.}
\end{cases}
\end{equation}
Put $m_j = \# w^{-1}(j)$, the number of times letter $j \in [k]$ appears in the word $w$. The following is the definition of the normalized by columns matrix of word $w$:
\begin{equation}\label{rmw}
\ol{\mc L}_i^j(w) =
\begin{cases}
\frac{1}{m_j} & \text{if $w(i)=j$} \\
0 & \text{if $w(i)\neq j$.}
\end{cases}
\end{equation}
Sums of elements in columns of $\ol{\mc L}$ are all equal to 1, different columns of $\mc L$ and $\ol{\mc L}$ are orthogonal.
Cyclic shift $(\tau^i_n)^*$ of words goes to cyclic permutation of rows of matrices, boundary operation $\delta^*$ on words corresponds to deletion of columns in matrices with numbers not in image of $\delta$ followed by deletion of zero rows in what remains. Opposite insertion of matrix as a submatrix with adding zero rows is described by $\partial_\delta$.

\subparagraph{Geometric fiber of elementary c.c. bundle; 0-,1-sections; order out of orientation.}
\begin{figure}[h!]
	\begin{center}
		\includegraphics[width=5.0in]{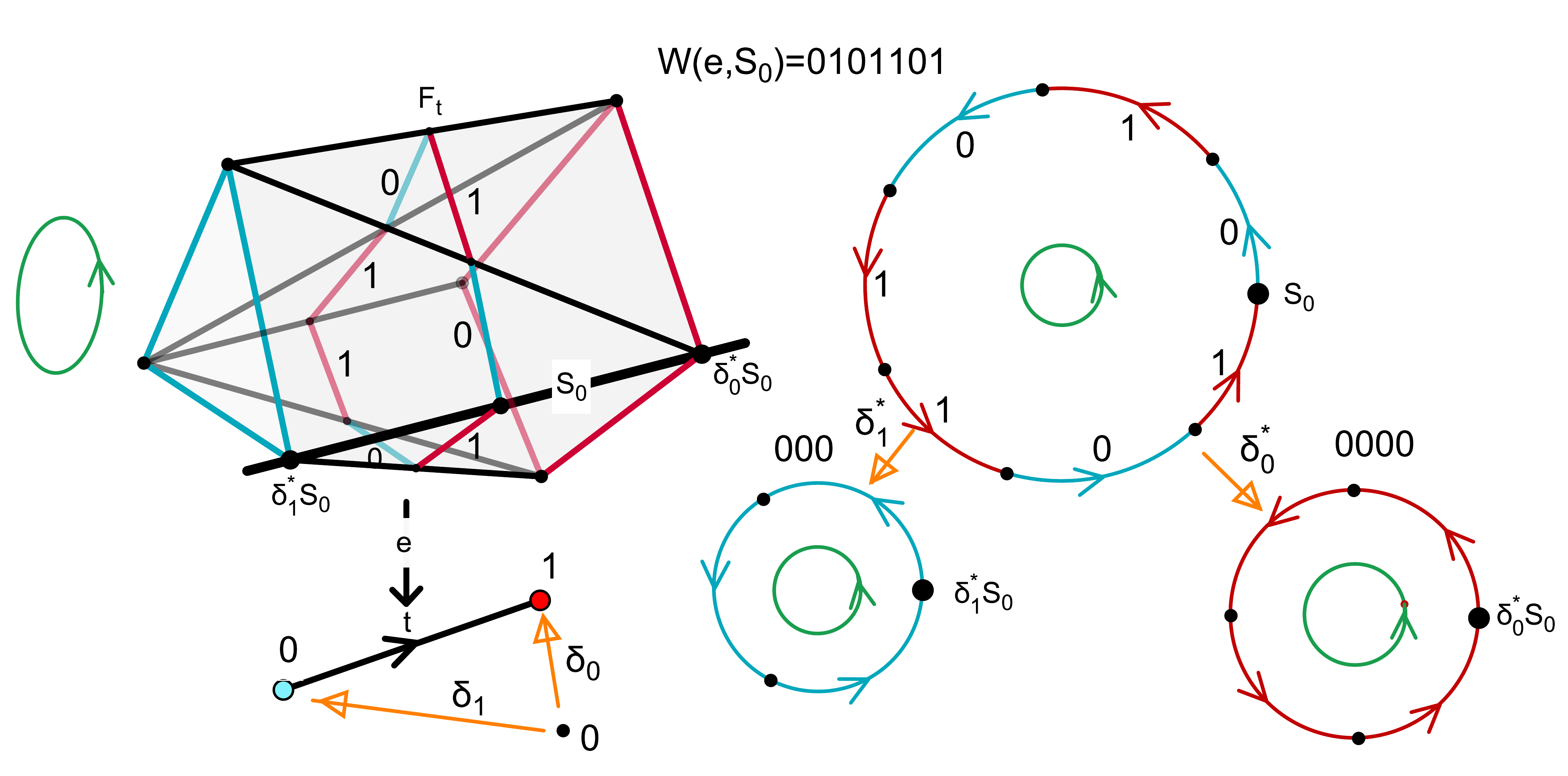}
		\includegraphics[width=2.5in]{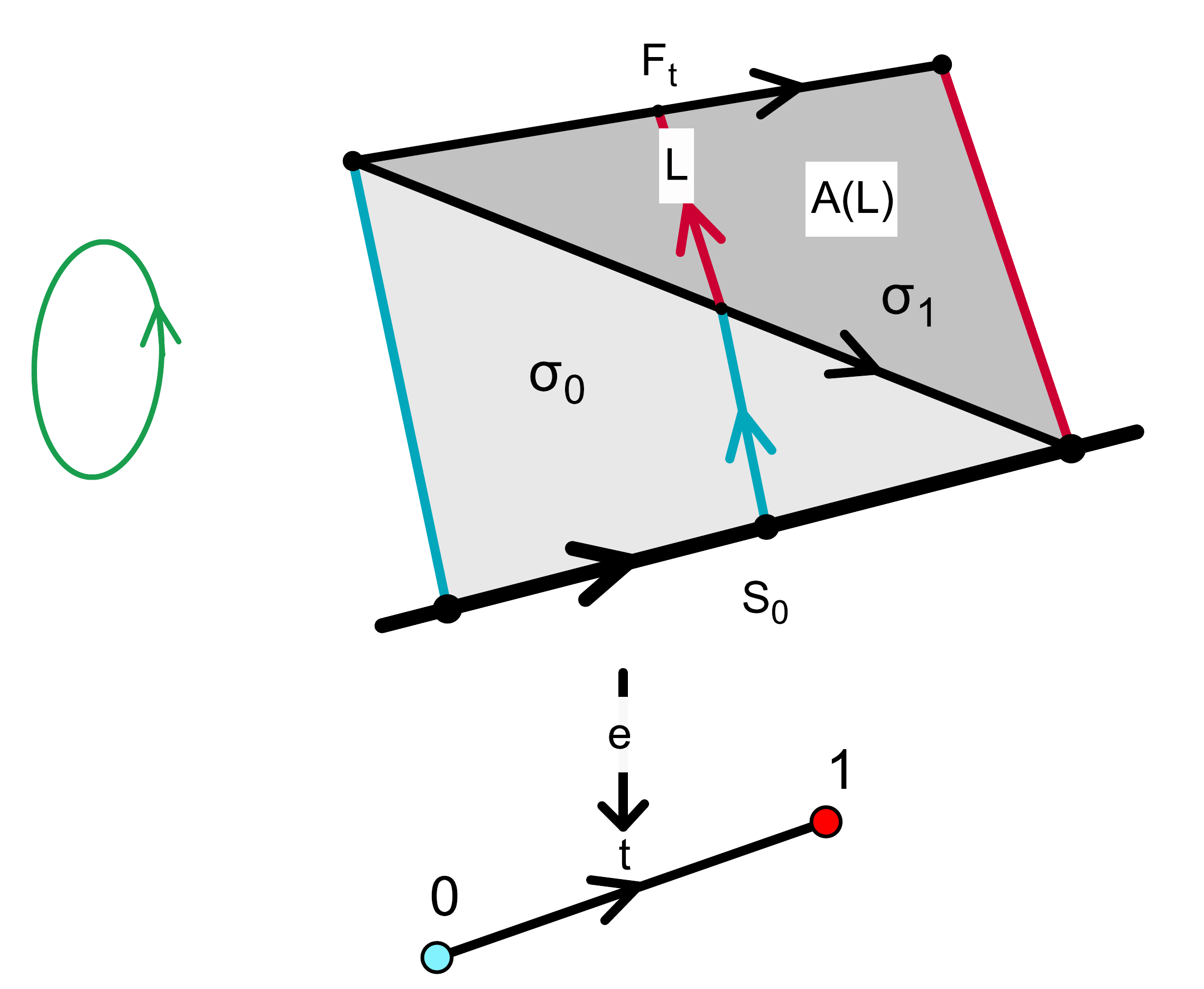}
		\caption{ \label{eb2}}
	\end{center}
\end{figure}

Let $\mf R \xar{\mf e} \la k \ra$ be an elementary s.c. bundle over ordered abstract $k$-simplex with $k+1$ vertices, $|\mf R|\xar{|\mf e|}\Delta^k$ its geometric realization, which is by definition an oriented trivial PL $S^1$-bundle over $\Delta^k$.  Geometrical simplices of $|\mf R|$ projecting epimorphically on $\Delta^k$ can have dimensions only $k$ and $k+1$. Call such a $k$-simplex a \textit{0-section} of $|\mf e|$ and a $k+1$-simplex a \textit{ 1-section } of $|\mf e|$. There are corresponding \textit{combinatorial 0-,1-sections}, the simplices in $\mf e$ that project epimorphically on $\la k \ra$.

Take geometric fiber $F_t$ of $|\mf e|$ over an interior point  $t\in \on{int}\Delta^k$. The fiber $F_t $ is a broken PL circle. Each arc $L$ of $F_t$ is an  intersection of the fibre with some $k+1$-simplex, $1$-section $A(L)$ of $|\mf e|$. The arc $L$  has as end vertices intersections with geometric $0$-sections, two faces of $A(L)$.  Broken circle $F_t$ has equal number of arcs and vertices, let this number be equal to $n+1$. This means that there are $n+1$ $0$- and $1$-sections of $|\mf e|$ and $\mf e$. Fix some $0$-section, denote it by $S_0$ and denote by $S_0(t)$ the corresponding vertex in $F_t$. Orientation of bundles $\mf e$, $|\mf e|$ creates orientation of circle $F_t$. Therefore all arcs $L$ obtain heads and tails, each vertex of $F_t$ obtains its input and output arcs. This generates a linear order on vertices of $F_t$ depending on the choice of the fixed vertex $S_0(t)$. In this order on any arc its tail precedes its head and $S_0(t)$ is the minimal vertex. The order on vertices creates a linear order on arcs in which an arc obtains the number of its tail. Therefore we obtain on $F_t$ $n+1$ ordered vertices $S_0(t),...,S_n(t)$ and $n+1$ ordered intervals $L_0(t),...,L_n(t); L_i(t)=[S_i(t),S_{i+1 \mod (n+1)}(t)],i=0,...,n$. Therefore the $1$-section $A(L_i)$ also obtains number $i$ in the order and we put $A_i = A(L_i)$.
\subparagraph{Word associated to elementary s.c. bundle with fixed section.} \label{word}
Every $1$-section $A_i$ of $|\mf e|$ is the domain simplex of a subbundle of type $\Delta^{k+1} \xar{|\sigma_j|}\Delta^k$  which is the geometric realization of an elementry simplicial degeneracy $\sigma_j$  of ordered simplices $\la k+1 \ra\xar{\la \sigma_j \ra} \la k\ra$, corresponding to the elementary degeneracy operator $[k+1]\xar{\sigma_j} [k]$ (\S\ref{simp}). Simplicial map $|\sigma_j|$ (Fig. \ref{simplex})
shrinks edge of $A_i=\Delta^{k+1}$ with vertices $v_{j},v_{j+1}$ to the single vertex $j$ of the base $\Delta^k$. Here  $j$ is a function of $A_i$ and therefore it is a function of number $i$. We denote this function as $j=\mc W(\mf e,S_0)(i)$. The function $\mc W(\mf e,S_0)$ is surjective and independent of $t \in \on{int} \Delta^k$, therefore it is a \textit{word (\S\ref{neck}), associated to the elementary s.c. bundle $\mf e$ with fixed $0$-section $S_0$}.

 \subparagraph{Changing section of elementary s.c. bundle vs cyclic shift of word, associated necklace.} \label{imath}
Let us have besides the $0$-section $S_0$ another $0$-section ${S_0}'$  of $\mf e$. The $0$-section ${S_0}'$ corresponds to a vertex of $F_t$ having number $\imath(S_0,{S_0}')$ in the vertex order determined by the orientation and section $S_0$. Then the word $\mc W(\mf e, {S_0}')$ is obtained from $\mc W(\mf e, S^0)$ by the action of cyclic shift $\tau_n^{\imath(S_0,{S_0}')} \mc W (\mf e, S_0) = \mc W (\mf e, {S^0}')$.

Thus the elementary s.c. bundle $\mf e$ obtains \textit{associated oriented  necklace  $\mc N(e)$} in the sence of \S\ref{neck} as an orbit of words $\mc W(\mf e, S_0)$ under cyclic shifts.

\subparagraph{Boundary of elementary s.c. bundle with section vs boundary of word. } \label{yy}
Take a face of the base simplex $\la k-1 \ra \xar{\la \delta_j\ra} \la k \ra$, and consider the corresponding boundary $\delta_j^* \mf e \xar{(\delta_j)_*} \mf e$ of the elementary s.c. bundle over that face. If $0$-section $S_0$ is fixed for $\mf e$ then $\delta_i^* e$ has induced $0$-section $\delta^*_i S_0$. So we have a morphism
$(\delta_j^*\mf e, \delta_j^*S_0 ) \xar{(\delta_j)_*} (\mf e, S_0)$. We are interested in relation between $\mc W=\mc W(\mf e, S_0)$ and $\mc W (\delta_j^*\mf e, \delta_j^*S_0 )$.
All $1$-sections $B$ of $\delta_i^* \mf e$ are  faces of $1$-sections $A_i$ of $\mf e$ such that $\mc W(i)\neq j$.

For every $1$-section $B$ of $\delta^*_j \mf e $ there is a unique $1$-section $\partial_{\delta_j} B$ of $\mf e$ such that $B$ is a face of $\partial_{\delta_j} B$ and this map is monotone respectively to induced order corresponding to the induced section $\delta_i^* S_0$. This statement is correct but requires some simplicial work. One should use mentioned in
\S\ref{cyl} $1$-dimensional version of Cohen's theorem on cylinder of simple map for PL manifolds.

  Combining the previous observations, we conclude: the
word $\mc W(\delta^*_j \mf e, \delta^*_j S_0) \equiv \delta^* \mc W(\mf e, S_0 )$ and the induced map on $1$-sections $\partial_{\delta_i} $ is domain boundary operator (\ref{wbound}) of words induced by codomain boundary $\delta_i$.

\subparagraph{Cyclic diagram of words associated to s.c. bundle with fixed sections on elemantary subbundles.} \label{cdw}
We denote by $\int^\NN \mf B$ the category of simplices of locally ordered simplicial finite complex $\mf B$. The objects are simplices and the only morphism between a $m$-simplex $U^m$ and a $k$-simplex $U^k$ is the face map $U^m \xar{\delta} V^k$ corresponding to face operator $[m]\xar{\delta}[k]$. This category has canonical functor $\int^\NN \mf B \xar{} \NN$ making it a category fibered in finite sets.

Let us have a s.c.  bundle $\mf E \xar{\mf p} \mf B$ over  $\mf B$. Let any elementary s.c. subbundle  $\mf p_U$ over a simplex $U \in \mf B$ has fixed  individual $0$-section $S_0^U$. We denote this system of $0$-sections $\bs S_0$; so we fix the pair $(\mf p, \bs S_0)$. Combining the constructions of \S\S\ref{word},\ref{imath},\ref{yy}, we see that
any boundary  $U \xar{\delta} V$ between simplices of $\mf B$ gives
a cyclic morphism of words
\begin{equation}\label{rot}
\mc W(\mf p, \bs S_0)(U \xar{\delta} V)=\left( \mc W(\mf p_U, S_0^U)
\xar{\tau^{ \imath(S_0^U, \delta^* S_0^V)}}
\mc W(\mf p_U, \delta^* S_0^V)  \xar{ \partial_\delta}\mc W(\mf p_V, S_0^V)\right).
\end{equation}
These data evidently commute with composition. Therefore we obtain a fibered over $\NN$ functor $\int^\NN \mf B \xar{\mc W(\mf p,\bs S^0)} C \mc W$. We can imagine that this functor is a coloring of the base simplices by words in alphabet consisting of vertices of the simplex, so that boundary morphisms on the base simplices correspond to cyclic morphisms of words in a way that diagram is commutative. Changing the $0$-sections causes equivalence of functors, i.e the system of cyclic permutations of words commuting with all the structure morphisms. Thus the isomorphism class of bundles goes to the equivalence class of fibered functors $\on{Funct}(\int^\NN \mf B, C \mc W)$. The inverse statement is also true with important comments (\S \ref{notes1}) but we don't fully develop it here.

\section{Rational parities of words and local formulas for powers of Chern classes} \label{rplf}
\subsection{Rational parity  of word and of odd necklace}
Consider a word $[n]\xar{w}[k]$. Call ``proper subword of $w$'' any subword  consisting of $k+1$ different letters (i.e. it is a section of map $w$). Proper subword  defines permutation of $k+1$ elements and this permutation has parity, even or odd. We define rational parity of $w$ to be the mathematical expectation of parities of all its proper subwords. Namely put
$$ P(w) = \frac{\#(\text{even proper subwords})-\#(\text{odd proper subwords})}{\#(\text{all proper subwords})} $$
Parity of a permutation of odd number elements is invariant under cyclic shifts of the permutation. Therefore if $k$ is even then $k+1$ is odd and in this situation $P(w)$ is an invariant of oriented necklace (\S\ref{neck}).  Words are ordered and has no nontrivial automorphisms. Necklaces can have remarkable groups of automorphisms \cite{Duzhin2013}. Parity of proper subword  survives  orientation preserving automorphism. Therefore if $k+1$ is odd, then $P(w)$ is a isomorphism invariant of the necklace, defined by $w$.

Parity of  permutation coincides with value of determinant of matrix of permutation.  Similar  interpretation has rational parity $P(w)$. Rational parity of $w$ is equal to the sum of maximal minors of normalized matrix of word $\ol{\mc L}$ defined in \S\ref{mat}:
\begin{equation}\label{rmatpar}
 P(w) = \sum_{0 \leq i_0<i_1<...<i_k \leq n} \det \ol{\mc L}_{(i_0,...,i_k)}(w)
\end{equation}
Where $\ol{\mc L}_{(i_0,...,i_k)}(w)$ denotes square submatrix of $\ol{\mc L}(w)$ with row numbers $i_0,...,i_k$.

\subsection{Local formulas for powers of Chern class in terms of rational parities of odd necklaces.}
Rational simplicial local formulas for Chern classes of circle bundles in the sense of \S\ref{GM} can be expressed trough rational parities of necklaces associated with elementary s.c. bundles (\S \ref{imath}):
\begin{theorem} \label{thm}
Rational function
\begin{equation}\label{npar}
\mbox{}^p\!C^h_1 (\mf e) = (-1)^h\frac{h!}{(2h)!} P(\mc N (e))
\end{equation}  of elementary s.c. bundle $\mf R \xar{\mf e} \la 2h \ra$	
over $2h$-simplex is a rational simplicial local formula for the $h$-th-power of the first Chern class of circle bundle.
\end{theorem}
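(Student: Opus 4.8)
The plan is to run Chern--Weil theory in the piecewise-differential (PD) category, with Kontsevich's $1$-form playing the role of the universal cyclic-invariant connection, and then to collapse the resulting integral over a $2h$-simplex to a combinatorial sum of minors. First I would equip $|\mf E|$ with the ``geometric proportions'' metric $gp$ of Section~\ref{geometry}: rescale the flat metric fiberwise on the prisms $A_i$ so that every fiber of $|\mf p|$ is a circle of perimeter $1$; then $|\mf p|$ becomes a PD principal $\T$-bundle and the combinatorial $0$-sections $\bs S_0$ over the simplices of $\mf B$ give a trivializing atlas. The content of \S\S\ref{word}--\ref{cdw} is exactly that, over a face $U \xar{\delta} V$, the transition rotations of this atlas are the fiberwise isometries encoded by the cyclic morphism of words (\ref{rot}); passing to matrices (\S\ref{mat}), both these rotations and the position on the unit circle of each vertex $S_i(t)$ of the fiber $F_t$ become linear in the barycentric coordinate $t$, with matrix $\ol{\mc L}(w)$. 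Reassembled over $\mf B$ this is precisely an explicit PD classifying map of $|\mf p|$ into the formal universal circle bundle over Connes' cyclic cosimplex, written down by the matrix maps of \S\ref{mat}.

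Next I would recall Kontsevich's form $\theta$ on the space of metric polygons inscribed in the unit circle: it is $\T$-invariant and cyclic-invariant, hence descends to a connection on the universal $\T$-bundle over the cyclic cosimplex, and its curvature $\Omega = d\theta$ is the universal first Chern form. By naturality the pullback $\omega$ of $\theta$ along the classifying map of the first step is a PD connection on $(|\mf E|, gp)$ whose curvature $\omega_\Omega$ represents $c_1(p)$ in PD de~Rham cohomology; then $\tfrac{1}{h!}\,\omega_\Omega^{\wedge h}$ is a closed PD $2h$-form representing $c_1^h(p)$, and integrating it over the ordered simplices of $\mf B$ produces a rational simplicial $2h$-cocycle (the de~Rham map for PD forms). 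Since $\Omega^{\wedge h}$ restricted to a simplex of $\mf B$ depends only on the subbundle over that simplex, this cocycle is \emph{local} in the sense of \S\ref{GM}, and it is an automorphism invariant of the elementary subbundle because $\theta$ is cyclic-invariant while words carry no nontrivial automorphisms.

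It then remains to evaluate, for an elementary bundle $\mf e$ over $\la 2h\ra$ with associated word $w = \mc W(\mf e, S_0)$, the integral $\int_{\Delta^{2h}} \tfrac{1}{h!}\,\omega_\Omega^{\wedge h}$ and to identify it with $(-1)^h\tfrac{h!}{(2h)!}\,P(\mc N(e))$. Writing $\omega$ in barycentric coordinates through the matrix map, $\omega_\Omega^{\wedge h}$ is an explicit polynomial $2h$-form in the entries of $\ol{\mc L}(w)$; integrating a top form over $\Delta^{2h}$ supplies the factor $\tfrac{1}{(2h)!}$, while expanding the $h$-th wedge power of the antisymmetric curvature produces a Pfaffian-type alternating sum of $(2h+1)\times(2h+1)$ minors of $\ol{\mc L}(w)$. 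Here Okuda's ``sum of minors equals a Pfaffian'' identity is the key algebraic step: it rewrites that Pfaffian as the plain sum $\sum_{0\le i_0<\dots<i_{2h}\le n}\det\ol{\mc L}_{(i_0,\dots,i_{2h})}(w)$, which by (\ref{rmatpar}) is $P(w)$, and this equals the necklace invariant $P(\mc N(e))$ because $2h+1$ is odd. Tracking the sign and the remaining numerical factors coming from the wedge power and the normalization of $\theta$ then yields the constant $(-1)^h h!/(2h)!$.

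Finally I would assemble the pieces as in Section~\ref{proof}: the cocycle identity follows from $d\bigl(\omega_\Omega^{\wedge h}\bigr)=0$ and Stokes' theorem on the simplices of $\mf B$; that the class of this cocycle is $c_1^h$ follows from Chern--Weil together with (\ref{pl/u}); locality and automorphism invariance were noted above. I expect the main obstacle to be the computation of the third step --- simultaneously matching Kontsevich's connection in barycentric coordinates, carrying out the wedge-power/Pfaffian bookkeeping, and invoking Okuda's identity with the correct constants; a secondary technical point is the rigorous verification in the first step that the word-matrix data really does define a PD classifying map, which rests on the $1$-dimensional Cohen cylinder statement used in \S\ref{yy}.
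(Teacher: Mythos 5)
Your proposal follows essentially the same route as the paper: the geometric-proportions metric, the classifying map into the universal cyclic bundle built from the normalized word matrices $\ol{\mc L}(w)$, Kontsevich's cyclic-invariant connection, PD Chern--Weil plus integration over base simplices, and Okada's minor-summation/Pfaffian identity to turn the wedge power of the curvature into the rational parity $P(\mc N(e))$. The only slip is the normalization $\tfrac{1}{h!}\,\omega_\Omega^{\wedge h}$: the class $c_1^h$ is represented by $\omega_\Omega^{\wedge h}$ itself (the $h!$ in (\ref{npar}) comes \emph{out of} the Grassmann computation of the wedge power, and $\tfrac{1}{(2h)!}$ from the simplex volume), so dividing by $h!$ would destroy the stated constant --- but since you correctly assert the final constant at the end, this is a bookkeeping inconsistency rather than a gap in the argument.
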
 Here suffix $p$ stays for ``parity".

\section{Geometric simplicial, circle and cyclic bundles} \label{geometry}
\subsection{Metric of geometric proportions and associated  circle bundle.}
\subparagraph{Elementary degeneracy and geometric proportions} \label{gp}
Let us introduce the simplest possible metric on the geometric total complex $|\mf R|$ of elementary s.c. bundle $\mf R \xar{\mf e}\la k \ra$ satisfying the properties, described in p. \ref{ccb}. Here we essentially rely on facts about geometric proportions between  elements of similar triangles from book VI of Euclid's Elements, where he presumably explicates achievements of Pythagorean or Athens school. Actually, geometric proportions is a geometric background of Milnor's geometric realization functor.
The fact is following (see Fig. \ref{simplex} for similar triangles).
\begin{quote}
Consider a standard geometric  simplicial degenracy $A=\Delta^{k+1} \xar{|\sigma_j|} \Delta^k  =B$. It projects the edge $\alpha_j $ of $A$ with vertices $v'_j,v'_{j+1}$ to vertex $v_j$.
Let us fix a flat Eucledean metric $\rho$ on total simplex $A$, in which $\alpha_j$ has positive length $a_j \in \R_{>0}$. Take a point with barycentric coordinates $t=(t_0,...,t_k)\in B$ and consider the interval $L_j(t) = |\sigma_i|^{-1} (t) \subset A$, the  fiber of projection $|\sigma_j|$. Then the length $l^\rho_j(t)$ of interval $L_j(t)$ in metric $\rho$ is equal to $a_j\cdot t_j$.
\end{quote}
The domain simplex $A$ of the projection $|\sigma_j|$ obtains bundle coordinates if we declare one of two $0$-sections in it (see figure \ref{simplex}) as ``bottom" or ``tail" section and another as ``top" or ``head" section respectively. The  point $u \in A$ is given bundle coordinates $u=(x^{st}(u); t_0(u),...,t_k(0) )$, where $t(u)$ are baricentric coordinates of projection on base, $x^{st}(u)$ distance in fiber interval $L_j(t(u))$ from tail point to $u$ in standard flat metric $st$ on simplex,  $0 \leq x^{st}(u) \leq t_j(u)$. If we change flat metric by assigning to edge $\alpha_j$ length $a_j$ then the coordinate $x^{st}(u)$ will change lineary,  to $a_j \cdot x^{st}(u)$.

\begin{figure}[h]
	\centering
	\includegraphics[height=3.0in]{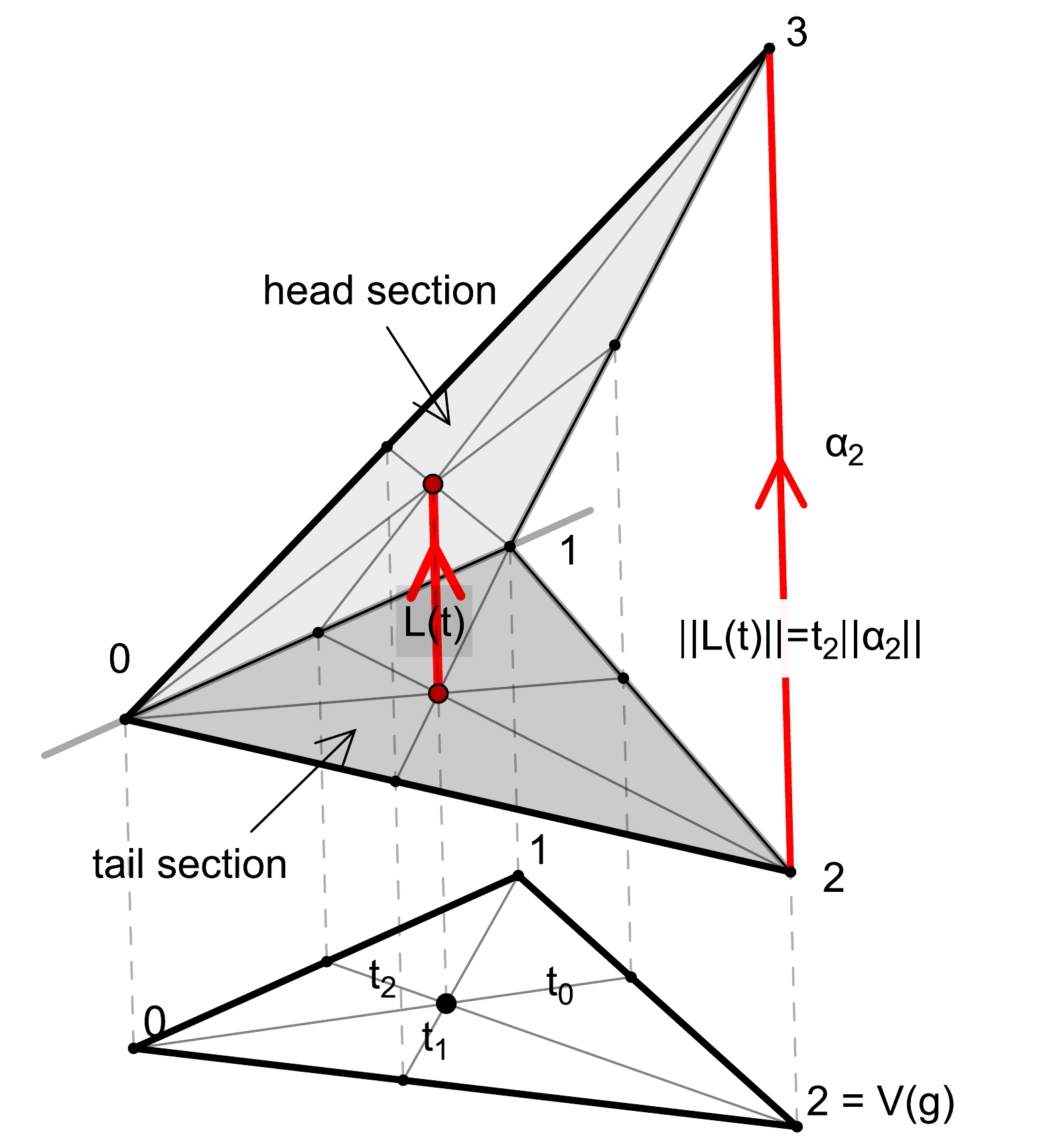}
	\caption{Bundle $\Delta^3 \xar{| \sigma_2| } \Delta^2$}\label{simplex}
\end{figure}
\subparagraph{Martrix of word and global bundle coordinates}
Geometric proportions provide geometric meaning to matrix $\mc L(\mc W)$ (\ref{mw}) \S\ref{mat} of word $\mc W=\mc W (\mf e, S^0)$  constructed in \S \ref{word} for elementary s.c. bundle $\mf R \xar{\mf e} \la k \ra$ with fixed $0$-section $S^0$. By geometric proportion identities \S\ref{gp},
\begin{equation}\label{key}
\sum_{a =0 }^k\mc L_i^a t_a = t_{\mc W(i)} =  l^{st}_i(t),
\end{equation}
 where $l^{st}_i(t)$ is the length of interval $L_i(t) = |e|^{-1} (t)\cap A_i$ in standard flat metric on $|\mf R|$.
 So, viewed as linear operator, matrix $\mc L$ transforms
 $$\mc L(t_0,...,t_k)=(l^{st}_0(t),...,l^{st}_n(t)) $$
 vector of barycentric coordinates of point $t$ in the base to vector of lengths of intersections with $1$-sections of fiber over $t$, ordered by bundle orientation.

The bundle space $|\mf R|$ also obtains global bundle coordinates relatively to chosen $0$-section $S_0$. The point $u$ lying  in a $1$-section $A_i$ is given the bundle coordinates
$$ u=(x^{st}_{\mc W(i)}(u) + l^{st}_0(t(u))+....+l^{st}_i(t(u); t(u)).$$
The $i$-th (with respect to orientation and fixed section $S_0$) $0$-section $S_i$ becomes the graph of function $S_i(t)=\sum_{b=0}^{i-1} l^{st}_b(t)$.

\subparagraph{Circle bundle coordinates} \label{cbc}
Let us normalize the standard flat metric on $|\mf e|$. Any elementary s.c. subbundle of $\mf e$ over a vertex $v_j$ of the base is a simplicial oriented circle $\mf R_j\xar{\mf e_{v_j}} \la 0\ra, |\mf R_j| \approx S^1$. Let the circle $\mf R_j$ have $m_j$ arcs. We assign to any arc in $|\mf R_j|$ the length equal to $\frac{1}{m_j}$. This induces new flat metric on all 1-sections $A_i$ since the collapsing edge of $A_i$ belongs to $|\mf R_{\mc W(i)}|$. So we obtain new flat metric ``$gp$'' on $|\mf R|$ (``$gp$'' stays for ``geometric proportions'').  In the new metric $gp$ vector of lengths of the intervals in the fiber over $t\in \Delta^k$ is expressible using normalized matrix of word $\ol{\mc L}(\mc W)$ ((\ref{rmw})\S \ref{mat}). Namely
\begin{equation}\label{key}
\sum_{a =0 }^k\ol{\mc L}_i^a t_a = \frac{1}{m_{\mc W(i)}}t_{\mc W(i)} =  l^{gp}_i(t),
\end{equation}

Then
\begin{equation}\label{reducedL}
\ol{\mc L}(t_0...t_k) = (l^{gp}_0(t),...,\ol l^{gp}_k(t))
\end{equation} and we have the identity
$$\sum_{i=0}^n l^{gp}_i(t) = \sum_{j=0}^k \frac{m_j}{m_j}t_j \equiv 1.$$
These observations show that $\ol{\mc L}$ is linear operator
\begin{equation}\label{Loperat}
\Delta^k \xar{\ol{\mc L}} \Delta^n.
\end{equation}
We have normalised the metric on $|\mf R|$ in such a way that all fiber circles would obtain unit lengths and face maps are isometries.

Now any point $u$ in $|\mf R|$, lying in the $i$-th $1$-section $A_i$ of $|\mf R|$, obtains trivial $\T$-circle bundle coordinates  $$ \mc T^{gp}_{S_0}(u) = (\exp(\frac{1}{m_{\mc W(i)}} x^{st}_{\mc{W}(i)}(u)  + \ S^{st}_{i}(t))| t) \in \T \times \Delta^k.$$ Let $\pmb T^k$ be a trivial circle bundle $\T \times \Delta^k \xar{} \Delta^k$. The map $\mc T^{gp}_{S_0}$ is a bundle  homeomorphism
$$|(\mf e, S_0)| \xar{\mc T^{gp}_{S_0}}(\pmb T^k, 0) $$, sending
$S_0$ to $0$-section and section $S_i$ to the graph of $\T$-valued function on $\Delta^k$ $$\exp(S^{gp}_i(t)),$$ where
\begin{equation}\label{trans}
S^{gp}_i(t)=\sum_{a=0}^{i}{ l^{gp}_a(t)}.
\end{equation}
Thus introducing the metric of geometric proportions  on bundle $|\mf e|$ with fixed  $0$-section $S_0$ produced $\T$-trivialization, controlled by data of reduced matrix of word $\ol{\mc L} (\mc W(\mf e, S_0))$.
\subparagraph{$0$-sections as $\T$-transiton functions; PD circle bundle $\pmb T^{gp}(\mc W(\mf p, \bs S^0)).$} \label{Tgp}
Let us consider a s.c. bundle $\mf p$ with a system $\bs S_0$ of $0$-sections over simplices of the base as in  \S \ref{cdw}. Let us fix the lengths of edges of circles $\mf E$ over vertices of $\mf B$ as in \S \ref{cbc}; then the  geometric realizations of all elementary subbundles obtain geometric proportions metric and  $\T$-trivialzation relative to the fixed sections. Suppose that the boundaries $U^m \xar{\delta} V^k$ of simplices in the base of trivializations $\pmb T^m$ and $\pmb T^k$ are related
by $\T$, the gauge transformation determined by change of $0$-sections (see Fig. \ref{mp})
\begin{figure}[h!]
	\begin{center}
		\includegraphics[width=3.0in]{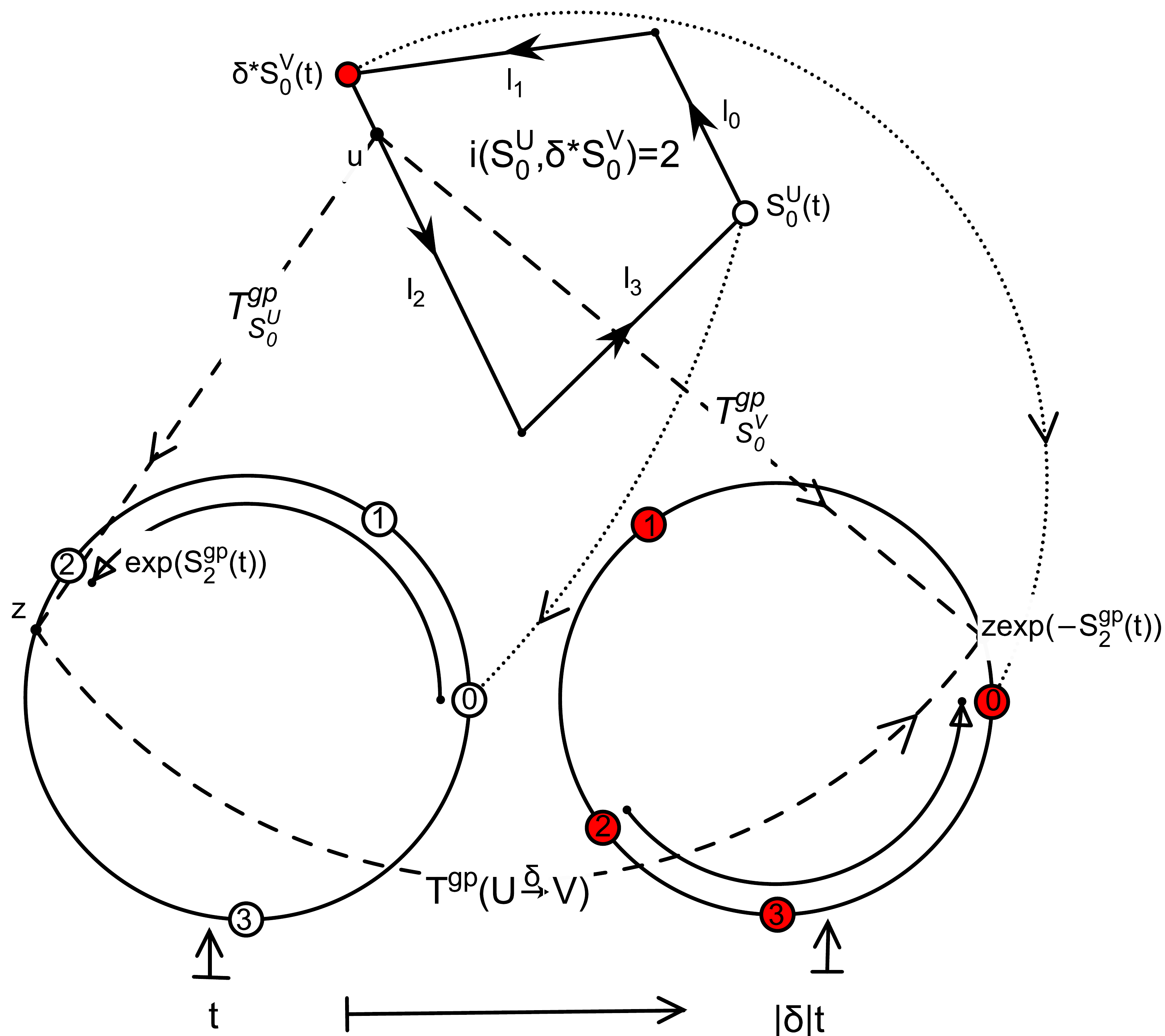} 
		\caption{ \label{mp}}
	\end{center}
\end{figure}
followed by trivial face embedding:
\begin{equation}\label{tgp}
\xymatrixcolsep{3pc}\xymatrix{
{|(\mf p_U, S^U_0)|} \ar[r]^{\delta_{*}} \ar[d]_{\mc T^{gp}_{S_0^U}} &
{|(\mf p_V, S^V_0)|} \ar[d]^{\mc T^{gp}_{S_0^V}} \\
(\pmb T^m,0) \ar[r]^{\pmb{T}^{gp}(U\xar{\delta} V)} & (\pmb T^k,0)
}, \end{equation}
$$ \pmb T^{gp}({U\xar{\delta} V})(z|t) = (z\exp (-S^{gp}_{\imath(S_0^U, \delta^* S_0^V)}(t))| |\delta|(t))$$

Here $\imath$ was defined in \S\ref{imath}. Transition transformation $\pmb T^{gp}({U\xar{\delta} V})$ is determined by the cyclic morphism of words $\mc W(\mf p, \bs S_0)(U \xar{\delta} V)$ (see (\ref{rot})\S\ref{cdw}); moreover the diagram of trivial circle bundles and linear boundary gauge transformations is a function of $\mc W(\mf p, \bs U_0)$.

The diagram $\pmb T^{gp} (\mc W(\mf p, \bs S_0))$ assembles to a PD circle bundle over $|\mf B|$ and the map $\mc T^{gp}$
assembles to the triangulation $|\mf p|\xar{\mc T^{gp}} \pmb T^{gp}(\mc W(\mf p, \bs S_0))$. Changing  the system of sections $\bs S_0 \xar{} \bs S'_0$ causes global gauge transformation of $\pmb T^{gp}(\mc W(\mf p, \bs S_0) ) \xar{} \pmb T^{gp}(\mc W(\mf p, \bs S'_0) )$.

\subsection{Cyclic cosimplex and universal cyclic circle bundle}
We equip half of Connes' cyclic cosimplex \cite[Ch 7.1.3.]{Lo} with a $\NN C$-diagram of trivial $\T$ - bundles $\pmb T C$, having cyclic linear transition maps. The diagram $\pmb T C$ does not itself admit a colimit assembled to any sort of fiber bundle, but still we can use it as a universal object for fiber bundles $\pmb T^{gp}(\mf p, \bs S_0)$.

\subparagraph{Circle bundle over half of cyclic cosimplex} Let $\NN \xar{\triangle} \bs{Top}$ be canonical semi-cosimplex with barycentric coordinates  Below we will slightly change notations for boundary operators and coordinates on it and denote:
$$\triangle ([n]) = \Delta^n = \{l_0,...,l_n| \sum l_i = 1\} \subset \R^{n+1};$$
$$\triangle (\partial_i) = |\partial_i|(l_0,...,l_{n-1}) = (l_0,...l_{i-1},0,l_{i},...,l_{n-1}).$$
Cyclic group $\Z/(n+1)\Z$ acts by cyclic permutations of $[n]$ with standard generator $\tau_n$ represented by the right shift by one, $\tau_n (i) = i-1 \mod (n+1)$.

\textit{Cyclic semi-cosimplex} is a functor $\NN C \xar{\Delta C} \mb {Top}$; it is an extension of $\triangle$, that incorporates cyclic shifts of barycentric coordinates. For $[n]\xar{\tau^i_n}[n]$ the map $\triangle C (\tau^i_n)=(\Delta^n \xar{|\tau_n^j|} \Delta^n)$ is defined by standard coordinate representation $\Z/(n+1)\Z$ in $\R^{n+1}$:
$$|\tau^j_n| (l_0,...,l_n) = (l_{\tau_n^j(0)},...l_{\tau_n^j(n)}) = (l_{n+1-j}...,l_n,l_0....l_{n-j}).$$

Consider the following family of linear functions
$$ \Delta^n \xar{S_i^n} [0,1],i=0,...,n; S_i^n = \sum_{a=0}^{i-1}l_a.$$
Then the family of $\T$-valued functions on simplices
$$\exp(S_i^n(l)), i=0,...,n$$
can be viewed as a family of sections of trivial circle bundle $\pmb T^n = (\T\times \Delta)^n \xar{} \Delta^n$ over $\Delta^n$.
Geometrically sections $\exp(S_i^n(l)), i=0,...,n$ at point $l=(l_0,...,l_n)\in \Delta^n$ decompose the fiber over each point, which is circle of unit perimeter $\T$ with a fixed zero point, into intervals $L_i(l)$ of lengths $l_i$, i.e. the intervals  $L_i(l) =[\exp(S_i^n(l)) , \exp(S_{i+1}^n(l)) ] \subseteq \T$.
This object can be regarded as Kontsevich's oriented  metric $n$-polygon $mp(l)$ (Fig. \ref{mp}), see \cite{Kontsevich92} of unit perimeter with fixed zero vertex.
\begin{figure}[h!]
	\begin{center}
		\includegraphics[width=4.0in]{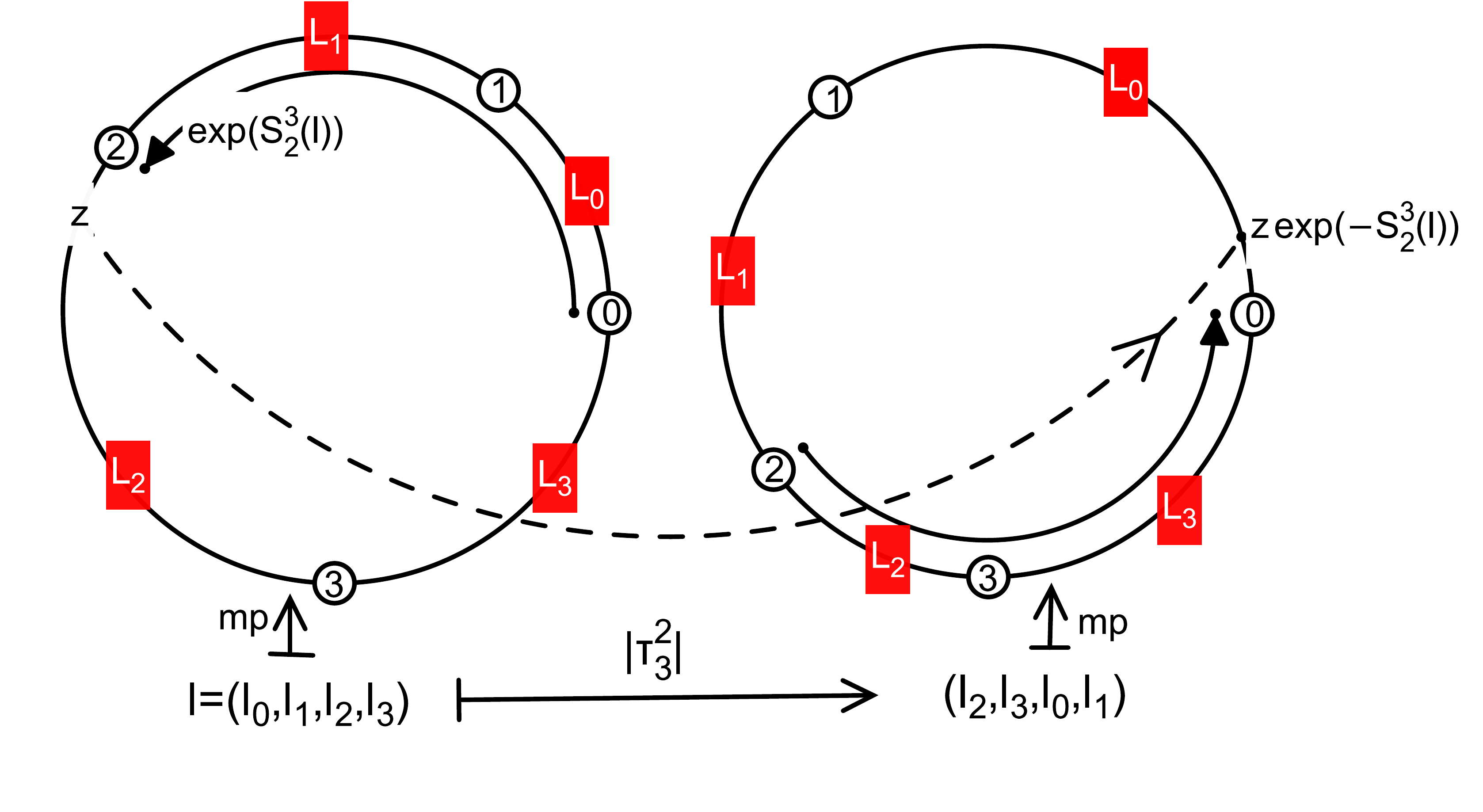} 
		\caption{ $\pmb TC(\tau_3^2)$ \label{mp}}
	\end{center}
\end{figure}

Consider the diagram  $\pmb TC$  of trivial $\T$ - bundles and non-trivial gauge transformations  over cyclic semi-cosimplex  $\triangle C$. For $[n-1]\xar{\partial_i}[t]$ put
\begin{equation}\label{TCpartial}
\pmb T^{n-1} \xar{\pmb TC(\partial_i)} \pmb T^n : \pmb TC(\partial_i)(z|l)=(z||\partial_i|(l))
\end{equation}
For $[n]\xar{\tau_n^i} [n]$ put
\begin{equation}\label{TCtau}
\pmb T^n\xar{\pmb T C (\tau^i_n)} \pmb T^n : \pmb T C (\tau^i_n) (z| l) =
\left( z\cdot \exp( - S_i^n (l)) | |\tau^i_n| (l) \right)
\end{equation}
One can imagine the bundle maps $\pmb T C (\tau^i_n)$ being constructed as follows: first we rotate the fiber of $\pmb T^n$ over $l \in \Delta^n$ by the angle $\exp( - S_i^n (l))$, sending $\exp(S_i^n (l))$ to zero section, after which we make cyclic permutation $|\tau^i_n|$ of coordinates in the base, causing proper cyclic renumbering of the intervals and sections in the fiber (Fig \ref{mp}).

$\pmb T C $ is a correct $\NN C$ diagram of trivial $\T$-bundles over simplices  and gauge transformations over the cyclic semi-cosimplex $\triangle C$.  This fact is subject of checking composition law vs cyclic identities.

\subparagraph{Circle bundle $\pmb T^{gp}(\mc W(\mf p, \bs S^0))$ is a pullback of  universal bundle $\pmb T C$ by map composed from matrices of words  $\ol{\mc L}(...)$.} \label{cycpullback}

Circle bundle $\pmb T^{gp}(\mc W(\mf p, \bs S^0))$ was defined in \S\ref{Tgp}, its linear $\T$-transiton maps are given by (\ref{tgp}), using boundary change of $0$-sections in geometric proportions metric. What remains of geometric constructions now, it is just to observe that the bundle $\pmb T^{gp}(\mc W(\mf p, \bs S^0))$ can be canonically regarded as a pullback of the universal cyclic bundle  $\pmb T C$ so that the linear operators of normalized matrices of words $\ol{\mc L}(...)$ compose into the classifying map $\ol{\mc L}{\mc W(\mf p, \bs S_0)})$.

Let us make few simple observations on linear operators that come from the reduced matrices of words. Consider boundary morphism and cyclic shift of words
\begin{equation*}
\xymatrix{
{[n_0]} \ar[d]_{w_0} \ar[r]^{\partial_\delta} &{[n]} \ar[d]^w  \\
{[k_0]} \ar[r]^\delta &  {[k]}}
\xymatrix{
{[n]} \ar[rr]^{\tau_n^i} \ar[dr]_{w_0} & & {[n]} \ar[dl]^w \\
&{[k]} &
}
\end{equation*}
Then the following diagrams of linear maps in barycentric coordinates are commutative
 \begin{equation}\label{Ltrans}
 \xymatrix{
 \Delta^{n_0} \ar[r]^{|\partial_\delta|} & \Delta^n \\
  \Delta^{k_0} \ar[u]^{\ol{\mc L}(w_0)} \ar[r]^{|\delta|} & \Delta^k \ar[u]_{\ol{\mc L}(w)}
  }
  \xymatrix{
  \Delta^n \ar[rr]^{|\tau_n^i|}  & & \Delta^n \\
  & \Delta^k \ar[ul]^{\ol{\mc L}(w_0)}\ar[ur]_{\ol{\mc L}(w)} &
}
 \end{equation}
These diagrams induce pullback diagrams of boundary gauge transformations
 \begin{equation}\label{Lpull}
 \xymatrixcolsep{4pc}\xymatrix{
 \pmb T^{n_0} \ar[r]^{\pmb TC(\partial_\delta)} & \pmb T^n \\
 \pmb T^{k_0} \ar[u]^{\ol{\mc L}_*(w_0)} \ar[r]^{ \pmb T_\delta} & \pmb T^k \ar[u]_{\ol{\mc L}_*(w)}
 }
\xymatrix{
	\pmb T^{n} \ar[r]^{\pmb TC(\tau_n^i)} & \pmb T^n \\
	\pmb T^{k} \ar[u]^{\ol{\mc L}_*(w_0)} \ar[r]^{ \pmb T^{gp}(\tau^i_n)} & \pmb T^k \ar[u]_{\ol{\mc L}_*(w)}
}
\end{equation}
Assume that we are given a coloring of locally ordered simplicial complex $\mf B$ by cyclic diagrams of words (\S\ref{cdw}) $\int^\NN \mf B \xar{\mc W} C\mc W$.\\
We have two functors  $\int^\NN \mf B \xar{\triangle \mc W, \triangle \mf B } \bs{Top} $:
$$\triangle \mc W=\left( \int^\NN \mf B \xar{\mc W} C\mc W \xar{\on{Dom}} \NN C\xar{ \triangle C} \bs{Top}\right).  $$
$$\triangle \mf B = \left( \int^\NN \mf B \xar{} \NN \xar{ \triangle} \bs{Top}\right). $$
The diagrams (\ref{Ltrans}) are the data of natural transformation  $\triangle \mf B \xar{\ol{\mc L}} \triangle \mc W$ and the diagrams (\ref{Lpull}) are the data of pullback $\pmb T\mc W \xar{\ol{\mc L}_*} \pmb TC $ of bundle diagram $\pmb TC$ to PD circle  bundle $\pmb T \mc W$ on $|\mf B|$, defined by transition functions encoded in $\mc W$. Therefore, starting from $\mc W (\mf p, \bs S_0)$ we obtain circle bundle $\pmb T^{gp}(\mc W(\mf p, \bs S_0))$ as a pullback of $\pmb TC$ defined by normalized matrices of words, corresponding to elementary subbundles with fixed $0$-sections.

\section{Some linear algebra} \label{algebra}
\subsection{Universal cyclic invariant characteristic forms}
We wish to find a PD connection form on  $\pmb TC$. Observe, that the bundle $\pmb TC$ over cyclic semi-cosimplex $\triangle C$ strictly speaking is not a bundle, but a diagram of trivial $\T$-bundles over simplices and transition gauge transformations. So the natural definition for PD connection form on $\pmb TC$ is a family of connections  $\gamma_n \in \Omega^1\pmb T^n, n=0,1,...$ such that $$\pmb TC(\tau_n^i)^*\gamma_n = \gamma_n, i=0,...,n;\ \pmb TC(\partial)^*\gamma_n = \gamma _m \forall ([m]\xar{\partial}[n]).$$
One may call such a connection ``cyclic invariant connection''.   This property has for instance the connection  form ``$\alpha$'' on metric polygons \cite[p.8]{Kontsevich92} which we slightly recompile. If now $\gamma_n$ is a cyclic invariant connection, then any power of its curvature form $\omega^h = \wedge^h d\gamma_n$ should be be PD cyclic invariant form on $\triangle C$.
\begin{lemma} \label{konts}
Family of connection forms  $\alpha_n \in \Omega^1 \pmb T^n$ defined in local coordinates $(x; l_0,...,l_n)$ on $\pmb T^k = \T\times \Delta^k$, by the expression
\begin{equation}\label{con}
\alpha_n = -dx - \sum_{0\leq i< j \leq n}l_i dl_j
\end{equation}
is universal cyclic invariant. Its curvature form has expression
\begin{equation}\label{curv}
d \alpha_n = \omega_n = - \sum_{0\leq i< j \leq n}dl_i \wedge dl_j \in  \Omega^2\Delta^n.
\end{equation}
The power of curvature form has expression
\begin{equation}\label{hpow}
\omega^h_n = (-1)^h h!\sum_{0 \leq i_1< i_2<...<i_{2h} \leq n} dl_{i_1} \wedge   dl_{i_2} \dots\wedge dl_{i_{2h}} \in \Omega^{2h} \Delta^n.
\end{equation}

\end{lemma}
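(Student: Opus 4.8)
The plan is to verify the three assertions in turn by direct computation, since each reduces to an elementary manipulation of differential forms on the explicit spaces $\pmb T^n = \T \times \Delta^n$. First I would check that $\alpha_n$ is a connection form, i.e.\ that it restricts to $-dx$ on each fiber (immediate, since the remaining summand $\sum_{i<j} l_i\,dl_j$ involves only base coordinates) and that it is $\T$-invariant (immediate, since $\T$ acts by translation in $x$ and $dx$ is translation-invariant). Then $d\alpha_n = -\sum_{i<j} dl_i\wedge dl_j$ because $d(dx)=0$ and $d(l_i\,dl_j)=dl_i\wedge dl_j$; this proves \eqref{curv}. Note that one should keep in mind the constraint $\sum_i l_i = 1$, so $\sum_i dl_i = 0$ on $\Delta^n$, which will be used when simplifying; in particular $\omega_n$ is a genuine $2$-form on the simplex.

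Next I would establish cyclic invariance, which is the one step requiring actual care. By \eqref{TCtau}, $\pmb TC(\tau_n^i)$ sends $(x;l)$ to $(x - S_i^n(l);\,|\tau_n^i|(l))$ where $S_i^n = \sum_{a=0}^{i-1} l_a$. So $\pmb TC(\tau_n^i)^*\alpha_n = -d(x - S_i^n(l)) - \sum_{p<q}(|\tau_n^i|^*l_p)\,d(|\tau_n^i|^*l_q) = -dx + dS_i^n - \sum_{p<q} l_{\tau_n^i(p)}\,dl_{\tau_n^i(q)}$. The claim is that $dS_i^n - \sum_{p<q} l_{\tau_n^i(p)}\,dl_{\tau_n^i(q)} = -\sum_{p<q} l_p\,dl_q$, i.e.\ that reindexing the double sum by the cyclic permutation changes it exactly by the exact form $dS_i^n = \sum_{a=0}^{i-1} dl_a$. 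It suffices to treat the generator $\tau_n = \tau_n^1$ and then iterate. For $\tau_n$ one has $\tau_n(p) = p-1 \bmod (n+1)$, so the pairs $(p,q)$ with $p<q$ map to pairs $(p-1,q-1)$ — which remain ordered unless $p=0$, in which case $(0,q)\mapsto (n,q-1)$, an \emph{unordered} pair contributing $l_n\,dl_{q-1}$ instead of $l_{q-1}\,dl_n$. Collecting these ``wrap-around'' terms and using $l_q\,dl_n + l_n\,dl_q = d(l_q l_n)$ together with $\sum_j dl_j = 0$, the discrepancy telescopes to $dl_n$; re-expressing via $\sum_j dl_j=0$ gives exactly $dS_n^n$... more precisely $-dl_0$ type corrections matching $dS_i^n$. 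Invariance under boundary operators $\pmb TC(\partial_i)(z|l) = (z \mid |\partial_i|(l))$ is easier: $|\partial_i|$ inserts a zero coordinate, so $|\partial_i|^*l_i = 0$ and $|\partial_i|^*dl_i = 0$, hence all summands of $\alpha_n$ involving index $i$ pull back to zero and the rest reindex to $\alpha_{n-1}$ verbatim; the same argument gives $\pmb TC(\partial_i)^*\omega_n = \omega_{n-1}$.

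Finally I would compute the powers of the curvature. Since $\omega_n = -\sum_{i<j} dl_i\wedge dl_j$ is a sum of decomposable $2$-forms, $\omega_n^h$ expands by multilinearity; a wedge of $2h$ factors $dl_{i_a}\wedge dl_{j_a}$ vanishes unless all $2h$ indices involved are distinct. For each fixed $2h$-element subset $\{k_1<\dots<k_{2h}\}$ one counts the number of ways to pair up its elements into $h$ ordered pairs $(i_a<j_a)$ whose associated wedge products, after rearrangement into the standard order $dl_{k_1}\wedge\dots\wedge dl_{k_{2h}}$, all carry the same sign; the standard combinatorial fact (a signed count of perfect matchings, i.e.\ a Pfaffian of the all-ones antisymmetric-pattern matrix) gives total multiplicity $h!$, and the overall sign is $(-1)^h$ from the $h$ copies of the leading $-$. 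This yields \eqref{hpow}. The main obstacle is the bookkeeping in the cyclic-invariance step — tracking the single wrap-around pair and seeing that the resulting discrepancy is exactly the exact form $dS_i^n$ predicted by the connection transformation rule; everything else is routine, and as the authors note it can alternatively be quoted from Kontsevich \cite{Kontsevich92} or the equivalent computation in \cite{Igusa2004}.
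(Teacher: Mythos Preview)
Your proposal is correct and follows essentially the same route as the paper: direct verification of boundary and cyclic invariance (reducing to the generator $\tau_n$ and using the barycentric constraint $\sum dl_i=0$), followed by the straightforward exterior-derivative and Grassmann-power computations. The one difference is that the paper first \emph{derives} the formula for $\alpha_n$ via a Bott-style construction --- averaging the orbit $\{\pmb TC(\tau_n^i)^*(-dx)\}$ against the weights $l_i$ --- before carrying out the same explicit check you outline; your combinatorial Pfaffian description of the $h!$ multiplicity in $\omega_n^h$ is in fact more detailed than the paper's one-line ``standard Grassmann algebra calculation.''
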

\begin{proof}
The form $\alpha_n$ is a Bott-style constructed universal form. The group of bundle automorphisms $\pmb T^n C(\Z/(n+1)\Z)$ of $\pmb T^n$ acts on connection forms lying in $\Omega^1 \pmb T^n$  and we wish to find a connection invariant under this action. We may try to construct invariant convex combination of the orbit of Maurer-Cartan horizontal connection $-dx$: for $i=1,...,n+1$ put $\pmb TC(\tau_n^i)^*(-dx) = -dx - dl_{k-i+1}-...- d l_n $ and take the smooth convex combination of connections in the orbit:
	\begin{equation}\label{}
	\begin{array}{l}
	+ \begin{array}{lll}
	l_n &\times &(-dx) \\
	l_{n-1}&\times &( -dx-dl_n) \\
	l_{n-2}&\times & (-dx-dl_{n-1} - d l_n)\\
	... &...& ...\\
	l_0 & \times & (-dx - dl_1 - ... -dl_n)\\
	\end{array}\\
    \midrule
	-dx - \sum_{0\leq i< j \leq n}l_i dl_j
	\end{array}
	\end{equation}
The sum is a connection form $\alpha_n= -dx - \sum_{0\leq i< j \leq n}l_i dl_j$.
	
The fact that for any boundary $m \xar{\partial} n $: $\pmb TC(\partial)^* \alpha_n = \alpha_m$ follows from the definition of $\pmb TC(\partial)$ in $(z| l)$ coordinates (\ref{TCpartial}). We now need to check that $\alpha_n$ is cyclic-invariant: $\pmb T C(\tau_n^i)^* \alpha_n = \alpha_n$ (\ref{TCtau}). It is sufficient to ensure that it is true for the generator, i.e. that $\pmb T C(\tau_n)^* \alpha_n = \alpha_n$. We calculate:
	\begin{equation}
	\begin{array}{rl}
	\pmb TC(\tau_n)^* \alpha_n& = -dx -dl_n - l_n(dl_0 + ... + dl_{n-1}) - \sum_{0\leq i< j \leq n-1}l_i dl_j \\
	\alpha_n & = -dx -\sum_{0\leq i< j \leq n-1}l_i dl_j - (l_0+...l_{n-1})dl_n
	\end{array} \label{conn}
	\end{equation}
Taking in account that the coordinates are barycentric and substituting $l_0 = 1- l_1-...-l_n$ in (\ref{conn}) we obtain equal expressions in both cases:
	\begin{equation}
	\begin{array}{rl}
	\pmb TC(\tau_n)^* \alpha_n& = -dx -dl_n +l_n dl_{n} - \sum_{0\leq i< j \leq n-1}l_i dl_j \\
	\alpha_n & = -dx - \sum_{0\leq i< j \leq n-1}l_i dl_j - dl_n + l_{n}dl_n
	\end{array} \label{}
	\end{equation}
The transgression of $\alpha_n$ is a simplectic  $2$-form $\omega_n =d \alpha_n = -\sum_{0\leq i< j \leq n}dl_i \wedge dl_j$ on the base which is a $\NN C$-form since $\alpha$ is a $\pmb T C$-form; its pull-back to the bundle is the curvature of $\alpha_n$ (it's obvious that we can consider it instead of the curvature). The power of $\omega$ is obtained by a standard Garssmann algebra calculation as the power of Grassmann quadratic form, providing the factor $(-1)^h h!$.
\end{proof}

\subsection{Sum of minors - Pfaffian identity and ``matrix parity'' rational function.}
Let $X=X^{[n]\times[k]}$ be a $[n]\times[k]$ matrix of variables $x^i_j$, $i \in [n], j \in [ k] $. We suppose that $n \geq k$. Let $[{n+1 \atop k+1}]$ be the set of all $k+1$-element subsets of $[n]$. Let $D^{\mathbf a} \in \mathbb{Z}[X]$ be the maximal minor of $X$ with rows numbered by $\mathbf a \in [{n+1 \atop k+1}]$, considered as a polynomial. Consider the polynomial $ \pmb s = \sum_{\mathbf a \in [{n \atop k}]} D^\mathbf{a}(X) \in \mathbb{Z}[X] $, the sum of all maximal minors. Polynomial $\boldsymbol {s}$ can be expressed as a Pfaffian polynomial of an even skew-symmetric matrix in variables $X$. This is  Okada's sum of minors - Pfaffian identity \cite[Theorem 3]{Okada89},\cite{Ishikawa95}. Assume that $\mathbf u \subseteq [k]  $ and denote by $X_{\mathbf u}$ the submatrix of variables $X$ formed by columns with numbers in  $\mathbf u$ and by $\boldsymbol{s}_\bs{u} \in \mathbb{Z}[X]$ the sum of maximal minors polynomial for $X_{\mathbf u}$.

If $k+1$ is odd then
{\tiny }
\begin{equation} \label{odd} \boldsymbol{s} = \Pf
\begin{pmatrix}
0 & \boldsymbol{s}_{\{0\}}& \boldsymbol{s}_{\{1\}}&\boldsymbol{s}_{\{2\}}& \cdots & \boldsymbol{s}_{\{k\}}\\
-\boldsymbol{s}_{\{0\}}& 0 & \boldsymbol{s}_{\{0,1\}} & \boldsymbol{s}_{\{0,2\}}& \cdots & \boldsymbol{s}_{\{0,k\}}\\
-\boldsymbol{s}_{\{1\}}& -\boldsymbol{s}_{\{0,1\}}& 0 & \boldsymbol{s}_{\{1,2\}}& \cdots & \boldsymbol{s}_{\{1,k\}}\\
-\boldsymbol{s}_{\{2\}}& -\boldsymbol{s}_{\{0,2\}} & -\boldsymbol{s}_{\{1,2\}}& 0 & \cdots & \boldsymbol{s}_{\{2,k\}}\\
& & & \cdots & & \\
-\boldsymbol{s}_{\{k\}}& -\boldsymbol{s}_{\{0,k\}}&-\boldsymbol{s}_{\{1,k\}}&-\boldsymbol{s}_{\{2,k\}}&\cdots& 0
\end{pmatrix}
\end{equation}

If $k+1$ is even then

\begin{equation} \label{even} \boldsymbol{s} = \Pf
\begin{pmatrix}
0 & \boldsymbol{s}_{\{0,1\}} & \boldsymbol{s}_{\{0,2\}}& \cdots & \boldsymbol{s}_{\{0,k\}}\\
-\boldsymbol{s}_{0,1}& 0 & \boldsymbol{s}_{\{1,2\}}& \cdots & \boldsymbol{s}_{\{1,k\}}\\
-\boldsymbol{s}_{\{0,2\}} & -\boldsymbol{s}_{\{1,2\}}& 0 & \cdots & \boldsymbol{s}_{\{2,k\}}\\
& & \cdots & & \\
-\boldsymbol{s}_{\{0,{k}\}}&-\boldsymbol{s}_{\{1,{k}\}}&-\boldsymbol{s}_{\{2,{k\}}}&\cdots& 0
\end{pmatrix}
\end{equation}

We shall use the defining recursive identity for Pfaffian of skew-symmetric a $2m\times2m$ matrix $M$:
\begin{equation}
\Pf(M)=\sum_{j=2}^{2m} (-1)^j a_{1,j} \Pf(M_{\hat{1},\hat{j}}) \label{pf}
\end{equation}
where $M_{\hat{1},\hat{j}}$ denotes  the matrix $M$ with both the $1$-st and the $j$-th rows and columns removed.

Denote by $\delta^*_j X$ the $(n+1) \times k$ matrix obtained by deleting $j$-th column from $X$. Applying (\ref{pf}) to the r.h.s. of (\ref{odd}), and replacing the Pfaffians by the sums of minors from the l.h.s. of (\ref{odd}), (\ref{even}) we obtain the following identity if $k+1$ is odd:
\begin{equation} \label{par}
\boldsymbol{s}=\sum_{i=0}^{k}(-1)^{j} \boldsymbol{s}_{\{j\}} \boldsymbol{s}(\delta^*_j X)).
\end{equation}
Define a rational function of matrix, the ``matrix rational parity function'', by the formula
\begin{equation} \label{matpar}
P=\frac{\boldsymbol{s}}{\prod_{j=0}^{k} \boldsymbol{s_{\{j\}}}}\end{equation}
The important properties of the matrix rational parity are given in the following lemma:
\begin {lemma} \label{matrix_parity}
\mbox{}\\
a) If $k+1$ is odd then $P(X)$ is invariant under cyclic permutations of rows. \\
b)
\begin{equation}\sum_{j=0}^{k}(-1)^{j} P(\delta_j^* X) = \begin{cases} P(X) & \text{\rm if $k+1$ is odd,} \\ 0 & \text{\rm if $k+1$ is even.} \end{cases}\end{equation}
\end{lemma}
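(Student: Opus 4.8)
The plan is to deduce both statements from one polynomial identity for the un-normalised quantities, namely
\begin{equation}\label{key-id}
\sum_{j=0}^{k}(-1)^{j}\,\boldsymbol{s}_{\{j\}}(X)\,\boldsymbol{s}(\delta^{*}_{j}X)=
\begin{cases}\boldsymbol{s}(X)&\text{if }k+1\text{ is odd},\\ 0&\text{if }k+1\text{ is even},\end{cases}
\end{equation}
combined with the remark that each column sum $\boldsymbol{s}_{\{j\}}(X)=\sum_{i=0}^{n}x^{i}_{j}$ is invariant under \emph{every} permutation of the rows. Given (\ref{key-id}), part b) is immediate: divide by $\prod_{l=0}^{k}\boldsymbol{s}_{\{l\}}$, note that $\boldsymbol{s}_{\{j\}}\boldsymbol{s}(\delta^{*}_{j}X)/\prod_{l}\boldsymbol{s}_{\{l\}}=\boldsymbol{s}(\delta^{*}_{j}X)/\prod_{l\neq j}\boldsymbol{s}_{\{l\}}=P(\delta^{*}_{j}X)$, and identify the left side with $\sum_{j}(-1)^{j}P(\delta^{*}_{j}X)$ and the right side with $P(X)$, respectively $0$. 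The odd case of (\ref{key-id}) is precisely identity (\ref{par}), which has already been pulled out of Okada's Pfaffian formula (\ref{odd}); the computation below re-proves it and settles the even case at the same time, so the Pfaffian identities (\ref{odd})--(\ref{even}) are not actually needed for the lemma itself.

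For part a): since the denominator $\prod_{j}\boldsymbol{s}_{\{j\}}$ is symmetric in the rows, it suffices to prove that the numerator $\boldsymbol{s}(X)=\sum_{\mathbf a}\det X_{\mathbf a}$ (sum over all $(k+1)$-subsets $\mathbf a$ of $[n]$) is fixed by the generating cyclic shift $\tau\colon i\mapsto i+1\bmod(n+1)$ of rows when $k+1$ is odd. I would compute $\boldsymbol{s}(\tau\cdot X)=\sum_{\mathbf a}\det(\tau\cdot X)_{\mathbf a}$ by splitting the subsets $\mathbf a$ according to whether $0\in\mathbf a$. If $0\notin\mathbf a$ the rows of $(\tau\cdot X)_{\mathbf a}$ are already in increasing order, so $\det(\tau\cdot X)_{\mathbf a}=\det X_{\mathbf a-1}$; if $0\in\mathbf a$ the first listed row is row $n$ (since $0-1\equiv n$) and restoring increasing order is one $(k+1)$-cycle of the rows, contributing the sign $(-1)^{k}$. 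Re-indexing the two families gives $\boldsymbol{s}(\tau\cdot X)=\sum_{n\notin\mathbf c}\det X_{\mathbf c}+(-1)^{k}\sum_{n\in\mathbf c}\det X_{\mathbf c}$, which equals $\boldsymbol{s}(X)$ exactly when $k$ is even. This is the matrix shadow of the fact that the parity of a permutation of an odd number of letters is unchanged under cyclic shifts.

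To prove (\ref{key-id}) I would expand $\boldsymbol{s}_{\{j\}}(X)=\sum_{i}x^{i}_{j}$ and $\boldsymbol{s}(\delta^{*}_{j}X)=\sum_{\mathbf a}\det(\delta^{*}_{j}X)_{\mathbf a}$ (sum over $k$-subsets $\mathbf a$ of $[n]$), interchange the order of summation, and for fixed $i$ and fixed $k$-subset $\mathbf a$ recognise $\sum_{j=0}^{k}(-1)^{j}x^{i}_{j}\det(\delta^{*}_{j}X)_{\mathbf a}$ as the Laplace expansion along the top row of the $(k+1)\times(k+1)$ matrix $Z(i,\mathbf a)$ whose rows are row $i$ of $X$ followed by the rows of $X$ indexed by $\mathbf a$ in increasing order. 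If $i\in\mathbf a$ this matrix has a repeated row and $\det Z(i,\mathbf a)=0$; if $i\notin\mathbf a$, sorting its rows moves row $i$ past the $r$ elements of $\mathbf a$ below $i$, so $\det Z(i,\mathbf a)=(-1)^{r}\det X_{\{i\}\cup\mathbf a}$, where $r=\on{rk}(i,\{i\}\cup\mathbf a)$ is the number of members of $\{i\}\cup\mathbf a$ smaller than $i$. Grouping the pairs $(i,\mathbf a)$ by their union $\mathbf b$, the coefficient of $\det X_{\mathbf b}$ becomes $\sum_{i\in\mathbf b}(-1)^{\on{rk}(i,\mathbf b)}=\sum_{r=0}^{k}(-1)^{r}$, which is $1$ if $k$ is even and $0$ if $k$ is odd; this is (\ref{key-id}).

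I do not foresee a real obstacle --- the whole argument is elementary linear algebra --- so the only delicate point is the sign bookkeeping: the $(-1)^{k}$ produced by the $(k+1)$-cycle in part a), the $(-1)^{r}$ produced by re-sorting the rows of $Z(i,\mathbf a)$, and the (reassuring) fact that deleting column $j$ from $X$ introduces no extra sign because the remaining columns keep their relative order. Once these are pinned down, everything collapses to the elementary sums $\sum_{r=0}^{k}(-1)^{r}$.
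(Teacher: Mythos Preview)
Your argument is correct. Part a) is the paper's one-line observation written out in full: cyclic shifts of the rows of $X$ send each maximal minor to another one up to a sign $(-1)^k$ coming from a $(k{+}1)$-cycle of its rows, and your split according to whether $0\in\mathbf a$ makes this explicit.

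Part b) is where you and the paper diverge. The paper does not prove the identity $\sum_j(-1)^j\boldsymbol s_{\{j\}}\boldsymbol s(\delta_j^*X)=\boldsymbol s(X)$ directly; it obtains the odd case by applying the Pfaffian recursion (\ref{pf}) to Okada's identity (\ref{odd}), recognising the smaller Pfaffians via (\ref{even}) to get (\ref{par}), and then divides by $\prod_j\boldsymbol s_{\{j\}}$. The even case is then handled by a separate $d^2=0$ argument: since $k+1$ even makes $\delta_j^*X$ fall under the odd case already proved, one substitutes $P(\delta_j^*X)=\sum_l(-1)^lP(\delta_l^*\delta_j^*X)$ and uses the simplicial identities. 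Your route instead establishes the single polynomial identity (\ref{key-id}) for both parities at once, by reading $\sum_j(-1)^jx^i_j\det(\delta_j^*X)_{\mathbf a}$ as a Laplace expansion of the stacked matrix $Z(i,\mathbf a)$ and summing the resulting alternating ranks. This is more elementary and self-contained: it bypasses the Pfaffian machinery entirely (as you note), treats odd and even uniformly, and makes the cocycle property transparent. What the paper's approach buys in exchange is the structural link to Okada's sum-of-minors/Pfaffian identity, which it uses later in Lemma~\ref{matrixpull}; so the Pfaffian detour is not wasted in the paper's overall economy, even though it is unnecessary for this lemma in isolation.
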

\begin{proof}
a) The determinant of odd dimensional matrix is invariant under cyclic permutations. Therefore if $k+1$ is odd then the sum of maximal minors of $X$ is invariant under the cyclic permutations of raws. \\
b) If $k+1$ is odd, then we can take expression (\ref{par}) and divide both sides by  $\prod_{i=0}^{k+1} \boldsymbol{s_j}$ resulting in required identity. If $k+1$ is even, then $\delta^*_j X $ has an odd number of columns; hence by the odd case we have a cocycle condition on the parity.
\end{proof}

\subsection{Pullback of the univesal cyclc characteristic forms by matrix map.}
\subparagraph{} \label{pull}
Let us have a $[n]\times [2h]$ matrix $A=\{a_i^j\},\ i\in 0,1,...,n, j\in 0,...,2h$ of nonnegative reals. We suppose that $n\geq 2h $ and $\sum_{i =0}^n a_i^j =1,\ j = 0,...,2h$. Consider $A$ as a linear map in barycentric coordinates $\Delta^{2h} \xar{A} \Delta^n$, where $t_0,...,t_{2h}$ are the baricentric coordinates on  $\Delta^{2h}$, and $l_0,...,l_n$ are coordinates on $\Delta^n$:
$$t= (t_0,...,t_{2h})\overset{A}{\mapsto}( l_0(t),...l_n(t)), l_i(t) = a_i^0 t_0+ a_i^1t_1+... + a_i^{2h}t_{2h}.$$
We wish to compute the pullback $A^*\omega^h$ of the $h$-th-power of the curvature (or transgresion) form (\ref{hpow})  in standard coordinates $t_0,t_1,...,t_{2h-1}$ on $\Delta^{2h}$. Denote by $\boldsymbol{s}(A)$ the sum of maximal minors  of matrix $A$.
\begin{lemma} \label{matrixpull}
	
	$$ A^*\omega_n^h= (-1)^h h!\boldsymbol{s}(A) dt_0 \wedge d t_1\wedge...\wedge dt_{2h-1}$$
\end{lemma}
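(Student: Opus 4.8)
The plan is simply to pull back the explicit formula (\ref{hpow}) monomial by monomial and collect the outcome as a multiple of the volume form $\nu := dt_0\wedge\cdots\wedge dt_{2h-1}$ on $\Delta^{2h}$; the scalar prefactor $(-1)^h h!$ of (\ref{hpow}) then just rides along. Since $A$ acts linearly in barycentric coordinates, $dl_i=\sum_{j=0}^{2h} a_i^j\,dt_j$, so for a $2h$-element subset $\mathbf i=\{i_1<\cdots<i_{2h}\}\subseteq\{0,\dots,n\}$ the standard Grassmann-algebra expansion gives
\[
A^{*}\bigl(dl_{i_1}\wedge\cdots\wedge dl_{i_{2h}}\bigr)=\sum_{j=0}^{2h} m_{\mathbf i,j}\,dt_{\{0,\dots,2h\}\setminus\{j\}},
\]
where $m_{\mathbf i,j}$ is the $2h\times 2h$ minor of $A$ on the rows $\mathbf i$ and on all columns except the $j$-th, and $dt_{\{0,\dots,2h\}\setminus\{j\}}$ denotes the increasing wedge of the remaining coordinate differentials.

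Next I would use that on $\Delta^{2h}$ the barycentric relation $\sum_{j=0}^{2h}dt_j=0$ holds. Substituting $dt_{2h}=-(dt_0+\cdots+dt_{2h-1})$ into $dt_{\{0,\dots,2h\}\setminus\{j\}}$ and cancelling repeated factors, a one-line sign count yields the clean relation $dt_{\{0,\dots,2h\}\setminus\{j\}}=(-1)^{j}\nu$, valid for every $j=0,\dots,2h$ (this is where the evenness of $2h$ is first used). Hence $A^{*}(dl_{i_1}\wedge\cdots\wedge dl_{i_{2h}})=\bigl(\sum_{j=0}^{2h}(-1)^{j}m_{\mathbf i,j}\bigr)\nu$, and I would recognise the alternating sum as $\det\widehat A_{\mathbf i}$, where $\widehat A_{\mathbf i}$ is the $(2h+1)\times(2h+1)$ matrix whose first $2h$ rows are the rows of $A$ indexed by $\mathbf i$ and whose last row is the all-ones vector: this is nothing but the cofactor expansion of $\det\widehat A_{\mathbf i}$ along that last row.

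The remaining, and most delicate, step is to sum over $\mathbf i$ and identify $\sum_{|\mathbf i|=2h}\det\widehat A_{\mathbf i}$ with $\boldsymbol s(A)$, the sum of all $(2h+1)\times(2h+1)$ minors of $A$. This is exactly where the hypothesis $\sum_{i=0}^{n}a_i^{j}=1$ becomes essential: the all-ones row of $\widehat A_{\mathbf i}$ equals the sum of all $n+1$ rows of $A$, so subtracting from it the $2h$ rows already present turns it into $\sum_{i\notin\mathbf i}(\text{row }i\text{ of }A)$ without changing the determinant; expanding multilinearly, $\det\widehat A_{\mathbf i}=\sum_{i\notin\mathbf i}\varepsilon(\mathbf i,i)\det A_{\mathbf i\cup\{i\}}$, with $\varepsilon(\mathbf i,i)=(-1)^{\#\{a:\ i_a>i\}}$ the sign of the permutation that sorts $(i_1,\dots,i_{2h},i)$ and $A_{\mathbf i\cup\{i\}}$ the maximal minor of $A$ on those rows listed increasingly. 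Summing over all $\mathbf i$, a fixed $(2h+1)$-subset $\mathbf k=\{k_0<\cdots<k_{2h}\}$ arises in $2h+1$ ways — one for each choice of which $k_p$ plays the role of the adjoined index — with $\varepsilon=(-1)^{p}$, so its total coefficient is $\sum_{p=0}^{2h}(-1)^{p}=1$ because $2h+1$ is odd. Therefore $\sum_{|\mathbf i|=2h}\det\widehat A_{\mathbf i}=\sum_{|\mathbf k|=2h+1}\det A_{\mathbf k}=\boldsymbol s(A)$, and restoring the prefactor $(-1)^h h!$ of (\ref{hpow}) gives the assertion.

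I expect the genuine work to be concentrated in the sign bookkeeping of this last step: one must check that the cofactor sign $(-1)^{j}$, the ``fill-in'' sign $\varepsilon(\mathbf i,i)$, and the $(2h+1)$-fold overcounting conspire so that \emph{every} maximal minor of $A$ re-emerges with coefficient exactly $+1$ — in particular that the overcounting signs telescope to $1$ rather than to $0$, which is again a parity feature of $2h+1$. Everything else (the Grassmann expansion, the elimination of $dt_{2h}$, and the observation that the normalization $\sum_i a_i^{j}=1$ enters nowhere except in replacing the all-ones row by a sum of rows of $A$) is routine.
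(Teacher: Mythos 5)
Your proof is correct, and it follows the paper's computation verbatim up to and including the reduction $dt_{\{0,\dots,2h\}\setminus\{j\}}=(-1)^j\,dt_0\wedge\cdots\wedge dt_{2h-1}$, which yields $A^*\omega_n^h=(-1)^h h!\bigl(\sum_{j}(-1)^j\boldsymbol{s}(\delta_j^*A)\bigr)\nu$. Where you diverge is in the last step, the identity $\sum_{j=0}^{2h}(-1)^j\boldsymbol{s}(\delta_j^*A)=\boldsymbol{s}(A)$ under the normalization $\sum_i a_i^j=1$. The paper obtains this by quoting Okada's sum-of-minors--Pfaffian identity (\ref{odd})--(\ref{even}), specialized through Lemma \ref{matrix_parity}(b) with all $\boldsymbol{s}_{\{j\}}=1$; you instead prove it directly by recognizing $\sum_j(-1)^j m_{\mathbf i,j}$ as the bordered determinant $\det\widehat A_{\mathbf i}$ with an all-ones last row, replacing that row by $\sum_{i\notin\mathbf i}(\text{row }i)$, expanding multilinearly, and checking that each maximal minor is produced $2h+1$ times with alternating signs summing to $\sum_{p=0}^{2h}(-1)^p=1$. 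Your sign bookkeeping ($\varepsilon(\mathbf i,k_p)=(-1)^{2h-p}=(-1)^p$) is right, so the argument is complete. What your route buys is self-containment and transparency: the Pfaffian machinery is eliminated from this lemma, and the two parity inputs (evenness of $2h$ in the $dt$-reduction, oddness of $2h+1$ in the telescoping sum) are laid bare. What the paper's route buys is economy elsewhere: Lemma \ref{matrix_parity} is also the source of the cocycle property of the rational parity used in Section \ref{proof} (the even case of part (b)), so the authors reuse one identity twice, whereas your argument covers only the odd, normalized case needed here.
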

\begin{proof}
We compute the summands in the the sum (\ref{hpow}):
	\begin{equation}\label{sum}
	\sum_{0 \leq i_1< i_2<...<i_{2h} \leq n} dl_{i_1}(t) \wedge   dl_{i_2}(t) \dots\wedge dl_{i_{2h}}(t),
	\end{equation}
corresponding to all $2h\times 2h+1$ submatrices of $A$ and then we apply identity of Lemma \ref{matrix_parity}. To describe a summand we first assume that $n=2h-1$ and compute $dl_{0}(t) \wedge   dl_{2} \dots\wedge dl_{2h-1}(t)$. Let $\delta^*_j A, j=0...2h$ be the square $2h\times 2h$ matrix which is obtained from $A$ by deleting the $j$-th column. We denote $\delta^*_jdt = dt_0\wedge dt_{j-1}\wedge dt_{j+1}\wedge...dt_{2h}$. Then by Grassmann algebra rules
	$$dl_{0}(t) \wedge   dl_{2}(t) \dots\wedge dl_{2h-1}(t) = \sum_{j=0}^{2h} \det( \delta^*_j A) \delta_j^*dt. $$
Substituting into the r.h.s. $t_{2h} = 1 - t_0 -...-t_{2h-1}$ we obtain
	\begin{equation} \label{qtbm}
	dl_{0}(t) \wedge   dl_{2}(t) \dots\wedge dl_{2h-1}(t) = \left( \sum_j (-1)^j\det( \delta^*_j A)\right) dt_0\wedge...\wedge dt_{2h-1}
	\end{equation}
Now we assume $n \geq 2h$ and apply (\ref{qtbm}) to every summand of (\ref{sum}). Finally using Lemma \ref{matrix_parity} in odd case and keeping in mind the condition that sums of elements in columns of $A$ is equal to $1$ (and hence the denominator in expression (\ref{matpar}) for matrix parity is equal to $1$) we obtain the desired expression for $A^*\omega^h_n$.

\end{proof}
\section{Proof of Theorem \ref{thm}} \label{proof}
We check that  $\mbox{}^p\! C_1^{2h}$ satisfies the definition of rational simplicial local formula from \S\ref{GM} \!.
\subparagraph{}
First, we check that $\mbox{}^p\! C_1^{2h}$  is a  a rational simplicial $2h$-cocycle on $\mf R^c(\s)$. Let $\mf N$ be the semi-simplicial set of isomorphism classes of necklaces. Then $\mf N_k$ (\S\ref{neck}) is the set of isomorphism classes of all finite necklaces having beads colored by $[k]$. Boundary map $\partial_i^* $ is induced from the corresponding boundary on words, i.e. by deletion of all beads with color $i$. Let $K^\bullet_\Delta(\mf N;\Q)$ be the rational simplicial cochain complex of $\mf N$. Then for a word $w \in C\mc W_{2h}$ (i.e. ``odd word'', word in alphabet of  $2h+1$ letters) rational parity $P(w)$ is an invariant of isomorphism class of oriented  ``odd necklace'' defined as cyclic orbit of $w$. Therefore rational  parity of odd necklaces is a function $\mf (\mf N_{2h} \xar{P^{2h}} \Q) \in K^{2h}_\Delta(\mf N;\Q)$. Rational cochain $P^{2h}$  is a simplicial cocycle, this follows from Lemma \ref{matrix_parity} (even case) applied to matrix representation of rational parity function (\ref{rmatpar}) and  matrix representation of  boundary of word \S\ref{mat} \!.  The association of necklace $\mc N(\mf e) $ to elementary s.c. bundle $\mf e$ (\S \ref{imath}) sends the isomorphim class of a bundle to the isomorphism class of necklace and boundary to boundary. Hence it defines a map of semi-simplicial sets $\mf R^c(\s) \xar{\mc N} \mf N$. So we get pullback $2h$ cocycle  $P^{2h}(\mc N(-))\in K^{2h}(\mf R^c;\Q)$. The $2h$-chain $\mbox{}^p\! C_1^{2h}$ (\ref{par}) is proportional to the cocycle  $P^{2h}(\mc N(-))$, therefore it is a rational simplicial $2h$-cocycle on $\mf R^c(\s)$.

\subparagraph{} We need to proof that for a s.c. bundle $\mf R\xar{ \mf p} \mf B $ pullback of $\mbox{}^p\! C_1^{2h}$ by map $\mf G_{\mf p} $ is a simplicial cochain on $\mf B$ representing $c_1^{2h}(\mf p)$. For the first Chern class the formula can be guessed and then checked on Madahar-Shakaria triangulation of Hopf bundle \cite{Madahar:2000}. For higher classes we can be sure only that $\mbox{}^p\! C_1^{2h}$ are some universal cocycles. We are not sure in homotopy class of $\mc N$ or $\mf R^c(\s)$, we have no good series  of examples of triangulated circle bundles to check. The last fact is related to well known problem of triangulation of the complex projective spaces $\mathbb CP^n$. It is very hard to triangulate $\mathbb CP^n$ (\cite{Sergeraert2010}), much harder to triangulate Hopf circle bundles over them.  Also it is diffucult to compare formulas with simplicial cup product of first class, this is related to the well known problems on formulas for cup product.
\subparagraph{} What we can do now, it is to use Chern-Weil homomorphism for Kontsevich's connection form $\alpha$ on metric polygons and then use De-Rham theorem. To this end, we have discussed cyclic bundle geometry in Section \ref{geometry} and linear algebra in Section \ref{algebra}.
\begin{enumerate}

\item Piecewise - differential Chern-Weil homorphism for piecewise - differential principal bundles exists as a by-product of Chern-Weil homomorphism for simplicial manifold principal bundles (\cite{Dupont1976}).

\item We choose system $\bs S_0$ of $0$-sections of elementary s.c. subbundles of $\mf p$, and obtain cyclic diagram of words $\mc W (\mf p, \bs S_0)$ on $\mf B$ (see \S\ref{cdw}).

\item We choose geometric proportions metric $gp$ on $|\mf E|$.  Normalized matrix of a word $\ol{\mc L}(\mc W(\mf R \xar{\mf e} \la k\ra, S_0))$ of elementary bundle $\mf e$ with fixed combinatorial section $S_0$ applied as linear operator to a point of the base simplex $\Delta^k $ produces vector of distances between $0$-sections of $|\mf R|$ in metric $gp$ ordered by orientation (see \S\ref{cbc}). Changing the section $S_0$ results in cyclic permutation of this vector. This is a point of communication between simplicial and cyclic geometry.

\item We associate with  $\mc W (\mf p, \bs S_0)$  PD circle bundle $\bs T^{gp}(\mc W(\mf E, \bs S_0))$ on $|\mf B|$ defined as a diagram of trivial bundles over simplices and transition gauge transformations defined by changing combinatorial sections (\S\ref{Tgp}). The circle bundle $\bs T^{gp}(\mc W(\mf p, \bs S_0))$ is canonically triangulated by $|\mf p|$.

\item In \S\ref{cycpullback} we obtained the bundle  $\pmb T^{gp}(\mc W(\mf p, \bs S_0))$ as a diagram pullback
$$
\bs T^{gp}(\mc W(\mf p, \bs S_0)) \xar{\ol{\mc L}_*} \pmb TC
$$
of the universal cyclic bundle diagram $\pmb TC$ over cyclic semi-cosimplex $\triangle C$. Diagram morphism $|\mf B| \approx \triangle \mf B \xar{\ol{\mc L}(\mc W(\mf p, \bs S_0))} \triangle C$ on simplex $\triangle(U^k)$ of the base is linear operator  $\Delta^k \xar{\ol{\mc L}(\mc W(\mf p_U, S^U_0))} \Delta^n$. Here $\mf p_U$ is the elementary s.c. subbundle of $\mf p$ over $U$, $S_0^U$ is its fixed section, and  $n+1$ is the number of combinatorial $0$-sections of $\mf p_U$, the same as the total number of  letters in the word $\mc W(\mf p_U, S^U_0)$.
\item  Cyclic invariance of Kontsevich's connection $\alpha$  on $\pmb TC$ (Lemma \ref{konts}  )  means that  its indivdulal pullbacks
 $$\alpha_U =\ol{\mc L}(\mc W(\mf p_U, S^U_0)^* \alpha \in \Omega^1(\pmb T^{gp}(\mc W(\mf p_U, S_0^U);\Q) \approx \Omega^1(\T \times \Delta^k;\Q),U \in \mf B$$
are invariant under changing the fixed section $S_0^U$ and therefore invariant under all transition gauge transformations. Hence the pullbacks $\alpha_U, U \in \mf B$ compose to a rational PD connection on PD circle bundle $\pmb T^{gp}(\mc W(\mf p, \bs S_0))$ invariant under all gauge tranformations caused by changing systems of sections $\bs S_0$. We can apply PD Chern-Weyl homomorphism and deduce that powers
$$
\omega^h_U =\ol{\mc L}(\mc W(\mf p_U, S^U_0)^* \omega^h   \in \Omega^{2h}(\triangle U;\Q)
$$
of the curvature $\omega_U = d\alpha_U \in \Omega^2(\triangle U;\Q)$ assemble into a rational PD form in  $\Omega^{2h}(|\mf B|;\Q)$ representing rational $h$-power of the first Chern class  $$c^{2h}_1(\pmb T^{gp}(\mc W(\mf p, \bs S_0));\Q) = c^{2h}_1(\mf p;\Q)\in H^{2h}(|\mf B|;\Q).$$
\item We can now apply the De Rham-Weyl-Dupont-Sullivan homotopy between $\Omega_{PD}(|\mf B|;\Q)$ and $K_\Delta(\mf B;\Q)$, obtaining the simplicial cocycles representing $c^h(\mf p;\Q)$, by integrating the forms $\omega^h_U$ over the base simplices. This gives  zero if dimension of the base simplex is not equal to $2h$. Thus we arrive at computing the pullbacks of the universal cyclic characteristic form
$$
\omega^h(\mc W(\mf e, S_0)) = \ol{\mc L}(\mc W(\mf e, S_0))^* \omega_n^h
$$
for elementary c.s. bundle $\mf e$ over $2h$-simplices having $n+1$ $0$-sections and integrating them over the base simplices. The form $\omega^h(\mc W(\mf e, S_0))$ is invariant under changing of the base section $S_0$, therefor the resulting number is an invariant of the necklace $\mc N (\mf e)$. The pullback of the cyclic form $\omega^h_n$  by matrix map on $\Delta^{2h}$ was computed in Lemma \ref{matrixpull} \S\ref{pull} using sum of minors - Pfaffian identity. The result is
$$
\omega^h(\mc W(\mf e, S_0)) = (-1)^h h! \bs s( \ol{\mc L}(\mc W(\mf e, S_0))dt_0\wedge...\wedge dt_{2h-1}.
$$
Here $s( \ol{\mc L}(\mc W(\mf e, S_0))$ is the sum of maximal minors of normalized matrix of odd word. This number is  equal to the rational parity of the necklace (\ref{rmatpar})
$$
s( \ol{\mc L}(\mc W(\mf e, S_0)) = P(\mc N(\mf e)).
$$
The factor $h!$ appears from the power of Grassmann quadratic form, $(-1)^h$ from the coordinate exchange rule for universal cyclic connection. What remains is to integrate the constant $2h$ form by $2h$ simplex which adds the volume $\frac{1}{2h!}$ of standard $2h$ simplex as factor and the promised local simplicial expression (\ref{npar}) for $c_1^h(\mf p; \Q)$ as $\mbox{}^p\! C_1^h(\mf e)$ is ready.

\end{enumerate}

\section{Notes}\label{notes}
\subparagraph{} \label{notes1}
Here we swept under the carpet an appropriate version of PL simplicial bundle theory. Although we need it only in elementary form and one-dimensional case, it still requires space for setup. Simplicial bundle theory is a parametric extension of simple homotopy theory of families of simple maps. It was presented  in \cite{waldhausen2013} and commented in lectures \cite{Lurie2014}.  Simple maps pop up in description of boundary of elementary s.c. bundle (\S\ref{yy}) and in 1-dimensional case  relate simplicial bundle combinatorics to cyclic category, this is what we are actually investigating.  In our case the adequate variant would be semi-simplicial which has not yet been fixed. Semi-simplicial bundle is a singular map of semi-simplicial sets (the same as  map of ``n.d.c. simplicial sets'' of \cite{RSI} or ``trisps map'' of \cite{Kozlov2008}). Semi-simplicial  circle bundles on given base  are in one-to-one correspondence with cyclic decorations of base by words.

\subparagraph{} Modulo the hidden semi-simplicial setup we
can formulate a couple of facts which we hope to write out somewhere in future.

Since Chern classes are integer classes, the corresponding simplicial cochains, represented by any rational local formulas,  should have integer simplicial periods, i.e. they are integrated to integer numbers over all integer $2h$ simplicial cycles in the base. When the base is some triangulation of an oriented closed surface this is a version of combinatorial Gauss-Bonnet theorem. This fact coupled with simple expressions $\mbox{}^p\!C_1(\mf e) $ provides some understanding  which bundles has or has not triangulation over particular simplicial base:
\begin{quotation}
	Let $|\mf B|$ be an oriented $2$-dimensional closed surface, triangulated by a classical simplicial complex $\mf B$ and the complex $\mf B$ has $F$ triangles. In this situation $F$ is always even number. Then Chern number of a classically triangulated circle bundle over $|\mf B|$ having $\mf B$ as the base complex, belongs to the integer interval $[-\frac{1}{2}F +1,...,\frac{1}{2}F-1]$
	
	Moreover Chern numbers of semi-simplicially  triangulated circle bundles over $|\mf B|$ having $\mf B$ as a base complex fill entire integer interval $[-\frac{1}{2}F,...,\frac{1}{2}F]$.  In this situatian $\mf B$ can be  assumed to be a finite semi-simplicial set, and $|\mf B|$ is ``$\Delta$-complex'' in the sense of \cite{hatcher2001algebraic}.
\end{quotation}
The only concrete example of a triangulated circle bundle observable in the literature is the triangulation of Hopf bundle over the boundary of tetrahedron $\partial \Delta^3$, constructed in \cite{Madahar:2000}. The parity local formulas allow one deduce that the cited result is the best possible. From the above statement one may conclude that over $\partial \Delta^3$ one can triangulate only trivial bundle and Hopf bundle using a map of classical simplicial complexes. If one can use semi-simplicial triangulations then over $\partial \Delta^3$ one can additionally triangulate circle bundle associated to tangent bundle of 2-sphere, and this is complete list of circle bundles allowing a triangulation over $\partial \Delta^3$. Classical triangulations are fundamental, but have their own additional degree of interesting arithmetical complexity relative to semi-simplicial triangulations, see \cite{JR1980}. Semi-simplicial category is related to classical simplicial by functorial double normal ( $\approx$ double barycentric) subdivision.

There is a somewhat strange more general statement which requires as a premise an integer combinatorial formula for the $1$-st Chern class:
\begin{quotation}
	If a circle bundle $p$ has triangultion with simplicial locally ordered base $\mf B$ then $c_1(p)$ can be represented by simplicial $2$-cocycle on $\mf B$ having values $0$ and $1$ on $2$-simplices.  The inverse is true for semi-simplicial triangulations of circle bundles and not true for triangulations by classical simplicial complexes.
\end{quotation}

\def\cprime{$'$}


\begin{thebibliography}{{Koz}08}

\bibitem[{Bry}08]{Brylinski08}
Jean-Luc {Brylinski}.
\newblock {\em {Loop spaces, characteristic classes and geometric quantization.
  Reprint of the 1993 edition.}}
\newblock Basel: Birkh\"auser, reprint of the 1993 edition edition, 2008.

\bibitem[Che77]{Chern1977}
Shiing-shen Chern.
\newblock Circle bundles.
\newblock In Jacob Palis and Manfredo do~Carmo, editors, {\em Geometry and
  Topology}, volume 597 of {\em Lecture Notes in Mathematics}, pages 114--131.
  Springer Berlin Heidelberg, 1977.

\bibitem[Coh67]{Cohen:1967}
Marshall~M. Cohen.
\newblock Simplicial structures and transverse cellularity.
\newblock {\em Ann. of Math. (2)}, 85:218--245, 1967.

\bibitem[DP13]{Duzhin2013}
Sergei Duzhin and Dmitrii Pasechnik.
\newblock {Automorphisms of necklaces and sandpile groups}.
\newblock {\em \href{http://arxiv.org/abs/1304.2563}{arXiv:1304.2563}}, 2013.

\bibitem[Dup76]{Dupont1976}
Johan~L. Dupont.
\newblock {Simplicial De Rham cohomology and characteristic classes of flat
  bundles.}
\newblock {\em {Topology}}, 15:233--245, 1976.

\bibitem[Ga{\u\i}05]{Gaifullin:2005}
A.~A. Ga{\u\i}fullin.
\newblock Computation of characteristic classes of a manifold from its
  triangulation.
\newblock {\em Uspekhi Mat. Nauk}, 60(4(364)):37--66, 2005.

\bibitem[GGL76]{GGL}
A.M. {Gabrielov}, I.M. {Gel'fand}, and M.V. {Losik}.
\newblock {Combinatorial calculus of characteristic classes.}
\newblock {\em {Funct. Anal. Appl.}}, 9:186--202, 1976.

\bibitem[GM92]{GM1992}
I.M. Gelfand and Robert~D. MacPherson.
\newblock {A combinatorial formula for the Pontrjagin classes.}
\newblock {\em {Bull. Am. Math. Soc., New Ser.}}, 26(2):304--309, 1992.

\bibitem[{Gor}78]{Goresky1978}
R.Mark {Goresky}.
\newblock {Triangulation of stratified objects.}
\newblock {\em {Proc. Am. Math. Soc.}}, 72:193--200, 1978.

\bibitem[Hat01]{hatcher2001algebraic}
A.~Hatcher.
\newblock {\em Algebraic Topology}.
\newblock Cambridge University Press, 2001.

\bibitem[HT72]{Halperin1972}
Stephen {Halperin} and Domingo {Toledo}.
\newblock {Stiefel-Whitney homology classes.}
\newblock {\em {Ann. Math. (2)}}, 96:511--525, 1972.

\bibitem[Igu02]{Igusa2002}
Kiyoshi Igusa.
\newblock {\em {Higher Franz-Reidemeister torsion.}}
\newblock Providence, RI: American Mathematical Society (AMS); Somerville, MA:
  International Press, 2002.

\bibitem[{Igu}04]{Igusa2004}
Kiyoshi {Igusa}.
\newblock {Combinatorial Miller-Morita-Mumford classes and Witten cycles.}
\newblock {\em {Algebr. Geom. Topol.}}, 4:473--520, 2004.

\bibitem[IK93]{IgusaKlein1993}
Kiyoshi Igusa and John Klein.
\newblock {The Borel regulator map on pictures. II: An example from Morse
  theory.}
\newblock {\em {$K$-Theory}}, 7(3):225--267, 1993.

\bibitem[IW95]{Ishikawa95}
Masao Ishikawa and Masato Wakayama.
\newblock {Minor summation formula of Pfaffians.}
\newblock {\em Linear Multilinear Algebra}, 39(3):285--305, 1995.

\bibitem[JR80]{JR1980}
Mark {Jungerman} and Gerhard {Ringel}.
\newblock {Minimal triangulations on orientable surfaces.}
\newblock {\em {Acta Math.}}, 145:121--154, 1980.

\bibitem[{Kaz}97]{Kazarian97}
M.\`E. {Kazarian}.
\newblock {Relative Morse theory of one-dimensional bundles and cyclic
  homology.}
\newblock {\em {Funct. Anal. Appl.}}, 31(1):16--24, 1997.

\bibitem[Kaz98]{Kazarian:1998}
M.~{\`E}. Kazarian.
\newblock The {C}hern-{E}uler number of circle bundle via singularity theory.
\newblock {\em Math. Scand.}, 82(2):207--236, 1998.

\bibitem[Kon92]{Kontsevich92}
Maxim Kontsevich.
\newblock {Intersection theory on the moduli space of curves and the matrix
  Airy function.}
\newblock {\em Commun. Math. Phys.}, 147(1):1--23, 1992.

\bibitem[{Koz}08]{Kozlov2008}
Dimitry {Kozlov}.
\newblock {\em {Combinatorial algebraic topology.}}
\newblock Berlin: Springer, 2008.

\bibitem[Lod98]{Lo}
Jean-Louis Loday.
\newblock {\em Cyclic homology}, volume 301 of {\em Grundlehren der
  Mathematischen Wissenschaften [Fundamental Principles of Mathematical
  Sciences]}.
\newblock Springer-Verlag, Berlin, 1998.

\bibitem[Lur14]{Lurie2014}
Jacob Lurie.
\newblock {Algebraic K-Theory and Manifold Topology}.
\newblock {\em \href{http://www.math.harvard.edu/~lurie/281.html}{Lecture
  course}}, 2014.

\bibitem[Man14]{Manolescu2014}
Cirpian Manolescu.
\newblock {Pin(2)-equivariant Seiberg-Witten Floer homology and the
  Triangulation Conjecture}.
\newblock {\em \href{http://arxiv.org/abs/1303.2354}{arXiv:1303.2354}}, 2014.

\bibitem[MS00]{Madahar:2000}
K.~V. Madahar and K.~S. Sarkaria.
\newblock A minimal triangulation of the {H}opf map and its application.
\newblock {\em Geom. Dedicata}, 82(1-3):105--114, 2000.

\bibitem[Oka89]{Okada89}
Souichi Okada.
\newblock On the generating functions for certain classes of plane partitions.
\newblock {\em J. Comb. Theory, Ser. A}, 51(1):1--23, 1989.

\bibitem[RS71]{RSI}
C.~P. Rourke and B.~J. Sanderson.
\newblock {$\triangle $}-sets. {I}. {H}omotopy theory.
\newblock {\em Quart. J. Math. Oxford Ser. (2)}, 22:321--338, 1971.

\bibitem[RW91]{Rocek1991}
M.~Ro{\v{c}}ek and Ruth~M. Williams.
\newblock On the {E}uler characteristic for piecewise linear manifolds.
\newblock {\em Phys. Lett. B}, 273(1-2):95--99, 1991.

\bibitem[sCS74]{CS74}
Shiing shen {Chern} and James {Simons}.
\newblock {Characteristic forms and geometric invariants.}
\newblock {\em {Ann. Math. (2)}}, 99:48--69, 1974.

\bibitem[{Ser}10]{Sergeraert2010}
Francis {Sergeraert}.
\newblock {Triangulations of complex projective spaces.}
\newblock In {\em {Contribuciones cient\'\i ficas en honor de Mirian Andr\'es
  G\'omez.}}, pages 507--519. Logro\~no: Universidad de La Rioja, Servicio de
  Publicaciones, 2010.

\bibitem[{Ver}80]{Verona1980}
Andrei {Verona}.
\newblock {Triangulation of stratified fibre bundles.}
\newblock {\em {Manuscr. Math.}}, 30:425--445, 1980.

\bibitem[WJR13]{waldhausen2013}
Friedhelm Waldhausen, Bj{\o}rn Jahren, and John Rognes.
\newblock {\em Spaces of PL Manifolds and Categories of Simple Maps (AM-186)}.
\newblock Princeton University Press, 2013.

\end{thebibliography}
\end{document}